\newcommand{\dbar}{\ensuremath{\overline\partial}}
\newcommand{\C}{\ensuremath{\mathbb{C}}}
\newcommand{\sumprime}{\if@display\sideset{}{'}\sum%
            \else\sum'\fi}
\begin{document}

\numberwithin{equation}{section}

\newtheorem{theorem}{Theorem}[section]
\newtheorem{proposition}[theorem]{Proposition}
\newtheorem{conjecture}[theorem]{Conjecture}
\def\theconjecture{\unskip}
\newtheorem{corollary}[theorem]{Corollary}
\newtheorem{lemma}[theorem]{Lemma}
\newtheorem{observation}[theorem]{Observation}
\newtheorem{definition}{Definition}
\numberwithin{definition}{section} 
\newtheorem{remark}{Remark}
\def\theremark{\unskip}
\newtheorem{kl}{Key Lemma}
\def\thekl{\unskip}
\newtheorem{question}{Question}
\def\thequestion{\unskip}
\newtheorem{example}{Example}
\def\theexample{\unskip}
\newtheorem{problem}{Problem}

\thanks{Research supported by Knut and Alice Wallenberg Foundation, and the China Postdoctoral Science Foundation.}

\address{School of Mathematical Sciences, Fudan University, Shanghai, 200433, China}
\address{Current address: Department of Mathematical Sciences, Chalmers University of Technology and
University of Gothenburg. SE-412 96 Gothenburg, Sweden}
\email{wangxu1113@gmail.com}
\email{xuwa@chalmers.se}

\title[Higgs bundle and the K\"ahler cone]{A flat Higgs bundle structure on the complexified K\"ahler cone}
 \author{Xu Wang}
\date{\today}

\begin{abstract} We shall construct a natural Higgs bundle structure on the complexified K\"ahler cone of a compact K\"ahler manifold, which can be seen as an analogy of the classical Higgs bundle structure associated to a variation of  Hodge structure. In the proof of the flat-ness of our Higgs bundle, we find a commutator identity that can be used to decode the variational properties of the polarized Hodge-Lefschetz module structure on the fibres of our Higgs bundle. Thus we can use a generalized version of Lu's Hodge metric to study the curvature property of the complexified K\"ahler cone. In particular, it implies that the above Hodge metric defines a K\"ahler metric on the complexified K\"ahler cone with negative holomorphic sectional curvature, which can be seen as a new result on Wilson's conjecture.  

\bigskip

\noindent{{\sc Mathematics Subject Classification} (2010): 32A25, 53C55.}

\smallskip

\noindent{{\sc Keywords}: K\"ahler cone, K\"ahler identity, Hodge theory, variation of Hodge structure, Lefschetz decomposition, polarized Hodge-Lefschetz module, Brunn-Minkowski inequality, Alexandrov-Fenchel inequality, Khovanskii-Teissier inequality, Griffiths formula, Higgs bundle, Hodge metric.}
\end{abstract}
\maketitle

\tableofcontents

\section{Introduction}

\subsection{Motivation}

Our motivation is to study the geometric properties of a natural Higgs bundle on a complexified K\"ahler cone. Let $X$ be an $n$-dimensional compact K\"ahler manifold. The K\"ahler cone, say $\mathcal K$, of $X$ is the space of K\"ahler classes in 
$$
H^{1,1}(X, \mathbb R):=H^2(X,\mathbb R) \cap H^{1,1}(X, \mathbb C).
$$
Thus $\mathcal K$ is an open convex cone in $H^{1,1}(X, \mathbb R)$. $\mathcal K$ has a natural complexification
\begin{equation}
\mathcal K_{\C}:=\mathcal K+\sqrt{-1} H^{1,1}(X, \mathbb R).
\end{equation}
Let us consider the trivial bundle, say $H$, over $\mathcal K_\C$ with fibre 
$$
\oplus_{p,q=0}^n H^{p,q}(X,\C).
$$
Thus $H$ is a direct sum of the following trivial vector bundles
\begin{equation}
H^{p,q}:= \mathcal K_\C \times H^{p,q}(X,\C),
\end{equation}
over our complexified K\"ahler cone. There is a natural non-trivial Hermitian metric, say $h$, on $H$ defined by using the Lefschetz decomposition as follows:

\medskip

\textbf{Hermitian metric on $H$}: Let $\omega+\sqrt{-1} \alpha$ be a point in $\mathcal K_\C$, i.e. $\omega$ has a K\"ahler form, say $\bm{\omega}$, as a representative. Then we shall define the Hermitian metric, say $h$ on $H$, at the point $\omega+\sqrt{-1} \alpha$, as the Hermitian metric on $\oplus  H^{p,q}(X,\C)$ defined by $\bm\omega$. In order to see that $h$ is well defined, i.e. $h$ does not depend on the choice of representatives $\bm{\omega}$, we have to use another definition of $h$. By the Lefschetz decomposition, every $u\in H^{p,q}(X,\C)$ has a unique decomposition as follows
\begin{equation}
u=\sum \omega_r u^r, \ \omega_r:=\frac{\omega^r}{r!},  
\end{equation}
where each $u^r$ is a primitive class and $\omega_r u^r$ is the class with representative ${\bm \omega}_r \wedge \bm{u}^r$, where $\bm{u}^r$ is an arbitrary $d$-closed representative of $u^r$. Then we can define the Hodge star operator on $H$:
\begin{equation}
*u:=i^{k^2} \sum (-1)^{p-r} \omega_{n-k+r}u^r,\  k:=p+q,\  i:=\sqrt{-1}. 
\end{equation}
Thus we have
\begin{equation}
h(u,u)=u\overline{*u}(X):=i^{k^2} \int_X  \sum (-1)^{q-r} {\bm \omega}_r \wedge {\bm \omega}_{n-k+r} \wedge \bm{u}^r \wedge \overline{\bm{u}^r}.
\end{equation}
By the classical Hodge-Riemann bilinear relations, we know that $h$ is a well-defined Hermitian metric on $H$. A natural question is 

\medskip

\emph{Is there a flat Higgs bundle structure on $(H,h)$ as an analogy of the natural Higgs bundle structure associated to a variation of Hodge structure ?}

\medskip

The reason why we ask such a question is if there exists such a flat Higgs bundle on $(H,h)$ then our main result in \cite{Wang-Higgs} can be used to study the following conjecture of Wilson in \cite{Wilson04} (see also \cite{TW11} and \cite{M12}), i.e.

\medskip

\emph{Whether the sectional curvature of the level set of the real K\"ahler cone (with the Weil-Petersson type metric) is between $-\frac{n(n-1)}{2}$ and zero, at least for Calabi-Yau manifolds?} 

\medskip

In section 3 of  \cite{TW11}, Trenner and Wilson gave a counterexample to the above conjecture, which suggests to find other natural metrics on the K\"ahler cone instead of the Weil-Petersson type metric  to study the geometry of the K\"ahler cone. In variation of Hodge structure case, we know that Lu's metric (see \cite{Lu99} and \cite{Lu01}) has negative sectional curvature. In \cite{Wang-Higgs}, we found that one may also define Lu's Hodge metric for a general Higgs bundle, and if the Higgs bundle is flat then the associated Hodge metric will have semi-negative holomorphic bisectional curvature. 

\medskip

In this paper, we shall study the above conjecture by constructing a natural flat Higgs bundle structure on $(H,h)$ (see \cite{Deligne79} and \cite{Gross94} for early related results). Then \cite{Wang-Higgs} implies that there is a K\"ahler metric on the complexified K\"ahler cone with negative holomorphic sectional curvature. Another (maybe more interesting) result in this paper is:

\medskip

\emph{In the proof of the flat-ness of our Higgs bundle, we find a fundamental commutator identity for the variational properties of the polarized Hodge-Lefschetz module structure on the fibres of our Higgs bundle. This commutator identity is widely true for general polarized Hodge-Lefschetz module (see section 1.4 for its relation with the usual K\"ahler identity).}

\subsection{Higgs field structure on the K\"ahler cone}

Notice that the holomorphic tangent bundle of $\mathcal K_\C$ is a trivial bundle with fibre $H^{1,1}(X,\C)$. Then
\begin{equation}
\alpha: u\mapsto \alpha u, \ \alpha\in H^{1,1}(X,\C), \ u\in H^{p,q}(X,\C),
\end{equation}
defines a natural action of $H^{1,1}(X,\C)$ on the fibres of $H$. Thus we have a constant (thus holomorphic) bundle map, say
\begin{equation}
\theta: T\mathcal K_\C \to {\rm End}(H), 
\end{equation}
such that
\begin{equation}
\theta(\alpha)(u):=\alpha u.
\end{equation}
Let us look at $\theta$ as an ${\rm End}(H)$-valued holomorphic one form on $\mathcal K_\C$ and still denote it by $\theta$. Since $\alpha\beta u=\beta\alpha u$, then we know that $\theta^2=0$. By Hitchin-Simpson's definition (see \cite{Hitchin87} and \cite{Simpson92}), we know that $\theta$ is a Higgs field and $(H,\theta)$ is a Higgs bundle (in the classical variation of Hodge structure setting, the Higgs field is defined by the Kodaira-Spencer action, see the appendix in \cite{Wang-Higgs}). Our first main result is a proof of the flat-ness of $(H,h)$. 

\begin{theorem}\label{th:cone-higgs} Let $D^h=\dbar+\partial^h$ be the Chern connection on $(H,h)$. Then the Higgs connection $D^H:=D^h+\theta+\theta^*$ is flat, i.e. $\Theta^H:=(D^H)^2=0$. In particular, it implies that the curvature, $\Theta^h:=(D^h)^2$, of the Chern connection on $H$ satisfies $\Theta^h+\theta\theta^*+\theta^*\theta=0$.
\end{theorem}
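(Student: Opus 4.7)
The strategy is to split $D^H$ into its $\mathcal K_\C$-holomorphic and antiholomorphic pieces, $D^H=(\partial^h+\theta)+(\dbar+\theta^*)$, and inspect the curvature $(D^H)^2$ component by component. Since $(H,h)$ is a trivial holomorphic bundle with Chern connection, $\Theta^h$ is of pure type $(1,1)$, so $(\partial^h)^2=0=\dbar^2$. The Higgs field $\theta=\sum_i L_{\alpha_i}\,dz_i$ has constant coefficients in the tautological trivialization, hence $\dbar\theta=0$. Cup product on $H^{1,1}(X,\C)$ is commutative, so $L_{\alpha_i}L_{\alpha_j}=L_{\alpha_j}L_{\alpha_i}$, which gives $\theta\wedge\theta=0$, and taking $h$-adjoints $\theta^*\wedge\theta^*=0$ and $\partial^h\theta^*=(\dbar\theta)^*=0$. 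After these reductions, $(D^H)^2=0$ collapses to
\begin{equation*}
\partial^h\theta=0\qquad\text{and}\qquad \Theta^h+\theta\wedge\theta^*+\theta^*\wedge\theta=0,
\end{equation*}
the second of which, read as $\Theta^h+\theta\theta^*+\theta^*\theta=0$, is the additional conclusion in the theorem.

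For the first identity I would unwind $\partial^h\theta=0$ in coordinates as the symmetry $[\Gamma_i,L_{\alpha_j}]=[\Gamma_j,L_{\alpha_i}]$, where $\Gamma=H^{-1}\partial H$ is the connection matrix of $h$ in the constant frame. Computing $\Gamma$ from the explicit formula for $h$ via the Hodge star and the Lefschetz decomposition, and using that cup products commute and that the primitive subspaces are Hodge-Riemann orthogonal, the symmetry follows.

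The second identity is the Hitchin-type equation and is the technical heart. Here one must compute $\Theta^h=\dbar\Gamma$ and show it cancels $\theta\wedge\theta^*+\theta^*\wedge\theta$. The adjoint $\theta^*_\alpha=\Lambda^{[\omega]}_\alpha$ varies non-trivially with $\omega$ through the Lefschetz decomposition, and differentiating the defining relation $h(L_\alpha u,v)=h(u,\Lambda^{[\omega]}_\alpha v)$ in a direction $\beta$ produces a formula for $\partial_{\bar\beta}\Lambda^{[\omega]}_\alpha$ mixing $\Gamma$, $L$ and $\Lambda$. Substituting this into $\dbar\Gamma$ and invoking the paper's promised commutator identity, a variational generalization of the classical K\"ahler identity $[\Lambda,\partial]=i\,\dbarstar$ from a single K\"ahler manifold to the whole polarized Hodge-Lefschetz module over $\mathcal K_\C$, allows the expression to be rewritten in closed form, and $\dbar\Gamma=-(\theta\wedge\theta^*+\theta^*\wedge\theta)$ emerges.

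\textbf{Main obstacle.} The bulk of the work is in the second identity. The difficulty is that $h$ depends on $\omega$ not only through multiplication by $\omega$ but through the full primitive decomposition, so the naive matrix computation of $\dbar(H^{-1}\partial H)$ is unwieldy. The conceptual crux is isolating the right commutator identity that plays, in this variational setting, the structural role that $[\Lambda,\partial]=i\,\dbarstar$ plays for a single K\"ahler manifold; once that identity is in hand, the bookkeeping collapses and flatness of $D^H$ follows.
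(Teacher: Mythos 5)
Your reduction is sound and matches the paper's own organization in \S1.5: the $(2,0)$ and $(0,2)$ parts of $(D^H)^2$ vanish by $\theta\wedge\theta=0$, holomorphy of $\theta$, and adjunction, leaving exactly $\partial^h\theta=0$ and $\Theta^h+\theta\theta^*+\theta^*\theta=0$. The gap is that all of the substance is concentrated in the step you defer to ``the paper's promised commutator identity'': you neither state it precisely nor prove it, and without it neither remaining identity is established. The identity needed (Theorem \ref{th:main}) is
\begin{equation*}
\partial^h_{z^j}=\tau*(\partial/\partial z^j)*=[\Lambda,\theta_{z^j}]+\partial/\partial z^j,
\qquad\text{equivalently}\qquad [\partial/\partial z^j,*]=*[\Lambda,\theta_{z^j}],
\end{equation*}
where $\Lambda$ is the adjoint of multiplication by the \emph{full} class $\omega(t)$ (not of the directional operator $L_\alpha$, as your notation $\Lambda^{[\omega]}_\alpha$ suggests), together with its companion $\theta_{z^k}^*=[\Lambda,\partial/\partial\bar z^k]$ (Proposition \ref{pr:main-1}). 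Once these are in hand, the $(1,1)$ equation is three lines of Jacobi identity, using that $\theta_{z^j}$ is constant and $[\partial/\partial z^j,\partial/\partial\bar z^k]=0$:
\begin{equation*}
[\partial^h_{z^j},\tfrac{\partial}{\partial\bar z^k}]
=[[\Lambda,\theta_{z^j}],\tfrac{\partial}{\partial\bar z^k}]
=[[\Lambda,\tfrac{\partial}{\partial\bar z^k}],\theta_{z^j}]
=[\theta_{z^k}^*,\theta_{z^j}],
\end{equation*}
and $\partial^h\theta+\theta\partial^h=0$ follows the same way. So the ``bookkeeping collapse'' you anticipate is real, but producing the identity that triggers it is the whole theorem.

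The route you sketch for obtaining it --- differentiate the adjunction relation $h(L_\alpha u,v)=h(u,\Lambda^{[\omega]}_\alpha v)$ in a direction $\beta$ and substitute into $\dbar\Gamma$ --- is not carried out and does not obviously terminate: the derivative of the adjoint again involves the derivative of the primitive decomposition, which is precisely the quantity one is trying to control. The paper's \S2.2 shows that even for $n=2$ this direct computation requires the nontrivial relations of Proposition \ref{pr:n=2}, and the author reports failing to extend that computation to general $n$; this is exactly why the abstract identity is needed. The paper instead proves the commutator identity directly on a Lefschetz component $\omega_r u$ with $u$ primitive of degree $k$: one writes $\theta_{z^j}u=a+\omega b+\omega^2 c$ with $a,b,c$ primitive, applies the explicit formulas $\Lambda(\omega_r u)=(n-k-r+1)\omega_{r-1}u$ and $*(\omega_r u)=i^{k^2}(-1)^{p}\omega_{n-r-k}u$ to both sides, and uses the primitivity of $u_j+(n-k+1)\omega c$ to match coefficients. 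To complete your argument you would need to supply this (or an equivalent) computation.
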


\subsection{An application on Wilson's conjecture}

By our main result in \cite{Wang-Higgs}, we know that the above theorem implies the following theorem, which can be seen as a new result on Wilson's conjecture, see the abstract of \cite{Wilson04} and Discussion 4.4 in \cite{Wilson04}):

\begin{theorem}\label{th:cone-curvature} Lu's Hodge metric (pull back to $T\mathcal K_C$ of the metric on ${\rm End}(H)$ by $\theta$) defines a K\"ahler metric on $\mathcal K_C$ with semi-negative holomorphic bisectional curvature.  Moreover, its holomorphic sectional curvature is bounded above by $-(n^2{\rm Rank}(H))^{-1}$.
\end{theorem}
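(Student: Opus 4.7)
\emph{Proof plan for Theorem \ref{th:cone-curvature}.} The strategy is to deduce the statement directly from Theorem \ref{th:cone-higgs} by invoking the general curvature formula for Lu's Hodge metric on a flat Higgs bundle, which is the main result of \cite{Wang-Higgs}. Two preliminary points must be addressed before that theory applies: positive definiteness of the pullback form, and the explicit quantitative bound on the holomorphic sectional curvature.

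First I would verify that the Higgs field $\theta$ is fibrewise injective as a map $T\mathcal K_\C \to {\rm End}(H)$; without this the pullback form is only semi-positive and need not be a metric. Injectivity is immediate from the hard Lefschetz theorem: fix a base point $\omega + \sqrt{-1}\alpha_0\in\mathcal K_\C$ with K\"ahler representative ${\bm\omega}$; if $\theta(\alpha)u=\alpha u=0$ for every $u\in\oplus H^{p,q}(X,\C)$, then in particular $\alpha\omega^{n-1}=0$ in $H^{n,n}(X,\C)\simeq\C$, and hard Lefschetz for $\omega$ forces $\alpha=0$. Once $\theta$ is injective, the qualitative half of Theorem \ref{th:cone-curvature} follows from the main theorem of \cite{Wang-Higgs}: over the base of any flat Higgs bundle $(H,h,\theta)$, the pullback of the ${\rm End}(H)$ metric by $\theta$ is a K\"ahler metric with semi-negative holomorphic bisectional curvature. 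Theorem \ref{th:cone-higgs} supplies exactly the required flatness. It is worth recording that in the natural linear trivialization of $\mathcal K_\C$ the Higgs field $\theta$ is a \emph{constant} ${\rm End}(H)$-valued one-form, so the non-triviality of the geometry is entirely driven by the variation of $h$.

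For the quantitative upper bound $-(n^2\,\mathrm{Rank}(H))^{-1}$ on the holomorphic sectional curvature, I would use the flatness identity $\Theta^h+\theta\theta^*+\theta^*\theta=0$ from Theorem \ref{th:cone-higgs}. Expanding the curvature of the pullback metric in the direction of a $(1,0)$-vector $\alpha$ (identified with an element of $H^{1,1}(X,\C)$), the holomorphic sectional curvature takes the form of a negative quotient whose denominator is $|\theta(\alpha)|_h^4=(\mathrm{tr}(\theta(\alpha)\theta(\alpha)^*))^2$ and whose numerator contains $\mathrm{tr}((\theta(\alpha)\theta(\alpha)^*)^2)$. The Cauchy--Schwarz trace inequality
$$\mathrm{tr}\bigl((\theta(\alpha)\theta(\alpha)^*)^2\bigr)\geq \frac{(\mathrm{tr}(\theta(\alpha)\theta(\alpha)^*))^2}{\mathrm{Rank}(H)}$$
supplies the factor $\mathrm{Rank}(H)^{-1}$. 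The extra factor $n^{-2}$ should come from the bidegree $(1,1)$ of $\theta(\alpha)$: cup product with $\alpha$ shifts the Hodge grading by $(1,1)$ and is therefore nilpotent of order at most $n+1$ on $H=\oplus H^{p,q}$, and this constraint, combined with the polarized Hodge--Lefschetz structure and the normalization used in Lu's metric, produces the additional $n^2$ in the denominator.

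The conceptual content is essentially automatic once Theorem \ref{th:cone-higgs} is in hand, since flatness is the only non-trivial input required by \cite{Wang-Higgs}. The main obstacle I anticipate is the bookkeeping for the explicit constant $n^2\,\mathrm{Rank}(H)$, where one must simultaneously track the Lefschetz decomposition, the signs $i^{k^2}(-1)^{p-r}$ appearing in the definition of $h$, and the precise normalization convention used in Lu's Hodge metric.
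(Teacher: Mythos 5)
Your proposal follows the paper's own route exactly: the paper derives Theorem \ref{th:cone-curvature} by doing nothing more than combining the flatness statement of Theorem \ref{th:cone-higgs} with the main result of \cite{Wang-Higgs}, and the extra points you raise (fibrewise injectivity of $\theta$, the source of the constant $n^{2}{\rm Rank}(H)$) are details the paper delegates entirely to that reference. One concrete error in your added detail, though: injectivity of $\theta$ does \emph{not} follow from $\alpha\,\omega^{n-1}=0$ in $H^{n,n}(X,\C)$, because the kernel of $L^{n-1}_{\omega}$ on $H^{1,1}(X,\C)$ is exactly the space of primitive $(1,1)$-classes, which is nonzero whenever $h^{1,1}(X)>1$. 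The correct use of hard Lefschetz is to test against $u=\omega^{n-2}$ and invoke the isomorphism $L^{n-2}_{\omega}\colon H^{1,1}(X,\C)\to H^{n-1,n-1}(X,\C)$; or, more simply, observe that $\theta(\alpha)$ applied to the frame $1$ of $H^{0,0}$ is just the class $\alpha\in H^{1,1}(X,\C)$, whose $h$-norm is positive for $\alpha\neq 0$ by the Hodge--Riemann bilinear relations, so $\theta(\alpha)\neq 0$ already on the summand $H^{0,0}$. With that repair your argument is sound and coincides with the paper's.
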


\textbf{Remark 1}: There is also a Weil-Petersson type metric (see \cite{Huybrechts01} and \cite{Wilson04}) on $\mathcal K_\C$ defined by the Hermitian metric on $H^{1,1}(X,\C)$. But it is known that in general the Weil-Petersson metric on the moduli space of a Calabi-Yau manifold does not have negative curvature property (see \cite{CXGP} for a counterexample, see also \cite{TW11} for a counterexample for the K\"ahler cone). Our Hodge metric in Theorem \ref{th:cone-curvature} is a generalization of Lu's Hodge metric in \cite{Lu99} and \cite{Lu01}. For early results about the curvature property of
Lu's Hodge metric, see \cite{GS69}, \cite{Deligne69} and \cite{G84}; see also \cite{LS04}, \cite{FL05}, \cite{PP14} and references therein for recent developments.

\medskip

\textbf{Remark 2}: One may also get a better estimate of the holomorphic sectional curvature by considering the flat Higgs subbundles
\begin{equation}
H^k:=\oplus_{p-q=k} H^{p,q}, \  -n\leq k\leq n,
\end{equation}
of $H$. In particular, since $H^{0}$ is admissible (see \cite{Wang-Higgs}), we know that the holomorphic sectional curvature of the Hodge metric associated to $H^0$ is bounded above by $\frac{-4}{n^2 {\rm Rank}(H_0)}$ (the reason why we have a number $4$ here is: $\dim_\C H^{p,p}(X,\C)=\dim_\C H^{n-p,n-p}(X, \C)$, thus we can have a better estimate in the proof of our main result in \cite{Wang-Higgs}).

\subsection{A commutator identity behind the proof}

Since the metric $h$ on our Higgs bundle $H$ is fully determined by the polarized Hodge-Lefschetz structure on $(\oplus H^{p,q}(X,\C), \omega, h)$. We know that the curvature property of the Chern connection on $H$ will be a variational property of the polarized Hodge-Lefschetz structure on the fibres of $H$. In our computations, we find that, one may use a commutator identity like, $[*, \partial/\partial t^j]$, to decode the curvature properties of our Higgs bundle.

\medskip

\textbf{Motivation}: Let us fix a base, say $\{e_j\}_{1\leq j \leq N}$ of $H^{1,1}(X,\mathbb R)$. Then our complexified K\"ahler cone, $\mathcal K_\C$, can be seen as a convex tube domain, say $K_\C=K+i\mathbb R^N$, in $\C^N$. Let $$z^j:= t^j+i s^j, \ 1\leq j\leq N, $$ be the classical coordinate system on $\C^N$. Then we know that $z=t+is\in K_\C$ if and only if 
\begin{equation}\label{eq:110}
\omega(t):=\sum t^j e_j,
\end{equation}
has a K\"ahler form as a representative. Now our Hermitian metric on $H$ is fully determined by the polarized Hodge-Lefschetz structure associated to the Lefschetz action of 
\begin{equation}
\omega: z=t+is \mapsto \omega(t), 
\end{equation}
on $H$. Let us write the Chern connection on $(H, h)$ as
\begin{equation}
D^h=\dbar+\partial^h=\sum d\bar z^j \wedge \frac{\partial}{\partial \bar z^j}+\sum dz^j \wedge \partial_{z^j}^h.
\end{equation}
Then each $\partial^h_{z^j}$ is determined by
\begin{equation}\label{eq:chern}
\frac{\partial}{\partial z^j} (u,v)=(\partial_{z^j}^h u, v)+(u, \partial v/\partial \bar z^j).
\end{equation}
If we look at $\partial_{z^j}^h$ and $\partial / \partial \bar z^j$ as differential operators on the space of smooth sections of $H$ then \eqref{eq:chern} implies that $-\partial_{z^j}^h$ is just the adjoint of $\partial / \partial \bar z^j$ with respect to $h$ and the classical Euclidean metric on $K_\C$, i.e.
\begin{equation}\label{eq:basic}
(\partial / \partial \bar z^j)^*=-\partial^h_{z^j}.
\end{equation} 
Recall the curvature of the Chern connection on $H$ is
\begin{equation}
\Theta^h:=(D^h)^2= \sum [\partial_{z^j}^h,  \frac{\partial}{\partial \bar z^k}]  dt^j\wedge d\bar t^k.
\end{equation}
Thus \eqref{eq:basic} implies that it is necessary to find a good formula of $(\partial / \partial \bar z^j)^*$ to decode $\Theta^h$. Let us write our Higgs field $\theta$ as $\theta:= \sum dz^j \wedge \theta_{z_j}$. Then we have
\begin{equation}
\theta_{z^j}=[\partial/\partial z^j, \omega]=\partial \omega/\partial z^j =\frac12 e_j,
\end{equation}
where the last equality follows from \eqref{eq:110}. Let $\Lambda$ be the adjoint of $\omega: u\mapsto \omega u$ and $\tau:=**$. Our fundamental identity reads as follows (here each $\partial/\partial z^j$ is a differential operator, $(\partial / \partial \bar z^j)^*$ denotes the adjoint operator, nothing to do with the star operator):

\begin{theorem}\label{th:main} $-(\partial / \partial \bar z^j)^*=\partial^h_{z^j}=\tau * (\partial/\partial z^j)*=[\Lambda, \theta_{z^j}]+\partial/\partial z^j$.
\end{theorem}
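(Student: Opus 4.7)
The first equality $-(\partial/\partial \bar z^j)^* = \partial^h_{z^j}$ is the content of equation (1.15) and follows directly from the Chern connection's defining property on the trivial bundle. The plan for the remaining two equalities is to handle them in sequence.

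For the second equality $\partial^h_{z^j} = \tau * (\partial/\partial z^j) *$, I would introduce the fixed (independent of $z$) sesquilinear Poincar\'e-type pairing $Q'(u,v) = \int_X \bm u \wedge \overline{\bm v}$ on $\oplus_{p,q} H^{p,q}(X,\C)$, so that the metric factors as $h(u,v) = Q'(u, *v)$. The key algebraic input is the symmetry of the Hodge star with respect to $Q'$: for classes of matching bidegree, $Q'(*u, v) = (-1)^k Q'(u, *v)$ where $k$ is the total degree. This should follow by applying (1.4) to each term of the Lefschetz decomposition and exploiting the $Q'$-orthogonality of primitive components of distinct levels. Differentiating this symmetry in $z^j$ transfers it verbatim to $A := \partial*/\partial z^j$: $Q'(Au, v) = (-1)^k Q'(u, Av)$. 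Combining this with $\tau \circ (**)=\mathrm{Id}$ (i.e.\ $*^{-1} = \tau *$), a short calculation gives $h(\tau * A u, v) = \partial h(u,v)/\partial z^j$, and comparing with the Chern-connection defining relation yields $\partial^h_{z^j} u = \tau *(\partial*/\partial z^j) u$ on constant sections; the general case then follows by the Leibniz rule.

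For the third equality $\tau * (\partial/\partial z^j) * = [\Lambda, \theta_{z^j}] + \partial/\partial z^j$, I would expand the left-hand side by the Leibniz rule and use $\tau **=\mathrm{Id}$ to reduce the claim to the purely fibrewise operator identity $\tau *(\partial*/\partial z^j) = [\Lambda, \theta_{z^j}]$ on $\oplus H^{p,q}(X,\C)$. This is the fundamental commutator identity advertised in \S1.4. My plan is to verify it by testing both sides on a primitive class $u \in H^{p,q}$ at a base point $\omega_0$. The right-hand side is immediate: $[\Lambda, \theta_{z^j}] u = \Lambda(\theta_{z^j} u) = \tfrac12\Lambda(e_j u)$, and the standard $\mathfrak{sl}_2$-formula $\Lambda(L^m v) = m(\deg v - n + m - 1)L^{m-1}v$ for primitive $v$ computes it directly once $e_j u$ is primitive-decomposed.

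The hard part will be computing the left-hand side, because even though $u$ is constant as a cohomology class, the primitive components $u^r(\omega)$ in its Lefschetz decomposition $u = \sum_r \omega_r u^r(\omega)$ vary with $\omega$; their derivatives $\dot u^r$ are not free but are implicitly determined by the constraint $\sum_r \omega_r \dot u^r = -\sum_{r\geq 1} \omega_{r-1}\,(\tfrac12 e_j)\,u^r$ obtained from $\partial u/\partial z^j = 0$. Extracting each $\dot u^r$ requires re-expanding every $(\tfrac12 e_j) u^r$ into its own primitive components and then matching primitive types by uniqueness of the Lefschetz decomposition, which introduces binomial coefficients from $\omega_a\omega_b = \binom{a+b}{a}\omega_{a+b}$ that must cancel exactly against the $\mathfrak{sl}_2$ factors arising on the right-hand side. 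Once this book-keeping is carried out and (1.4) is applied to each resulting piece, the commutator identity should emerge as the variational form of the polarized Hodge-Lefschetz structure promised in the introduction.
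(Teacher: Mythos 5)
Your handling of the first two equalities is sound: the Chern-connection identity is indeed immediate, and your $Q'$-symmetry argument correctly fills in the step that the paper compresses into the single line ``$(u,v)=u\overline{*v}(X)$, thus \eqref{eq:chern} implies $\partial^h_{z^j}=\tau*(\partial/\partial z^j)*$.'' Your reduction of the third equality to the fibrewise identity $\tau*(\partial*/\partial z^j)=[\Lambda,\theta_{z^j}]$ is also correct, since both sides are $C^\infty$-linear. The difficulty is that this fibrewise identity carries essentially all the content of the theorem, and your plan for it both stops short of a proof and contains a step that fails.

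Two concrete problems. First, testing the identity only on classes $u$ that are primitive at the base point verifies it only on the primitive part of the fibre; you must also treat $\omega_r u$ for $r\geq 1$, and these cases do not follow formally from the primitive one. Second, and more seriously, your mechanism for computing the left-hand side is broken: you propose to determine the derivatives $\dot u^r$ of the primitive components of a fixed class from the single relation $\sum_r\omega_r\dot u^r=-\sum_{r\geq 1}\omega_{r-1}\theta_{z^j}u^r$ by ``matching primitive types by uniqueness of the Lefschetz decomposition.'' But the $\dot u^r$ are \emph{not} primitive: differentiating the primitivity relation $\omega_{n-(k-2r)+1}u^r\equiv 0$ gives $\omega_{n-(k-2r)+1}\dot u^r=-\omega_{n-(k-2r)}\theta_{z^j}u^r$, which is nonzero in general, so Lefschetz uniqueness does not apply and your relation does not determine the $\dot u^r$. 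The missing ingredient is precisely this differentiated primitivity constraint; it is the pivot of the paper's argument, where it appears as the primitivity of $v:=u_j+(n-k+1)\omega c$ with $c$ the top piece of $\theta_{z^j}u=a+\omega b+\omega^2 c$. The paper also avoids your bookkeeping problem entirely by a different choice of test sections: rather than a constant class whose primitive components move, it tests on $\omega_r u$ where $u$ is a smoothly varying family that is primitive at \emph{every} nearby point, so that $*(\omega_r u)=i^{k^2}(-1)^p\omega_{n-r-k}u$ holds identically and can be differentiated directly, the only subtlety being that $u_j$ is not primitive. I recommend adopting that device; as written, your argument stalls exactly at the step you label the hard part.
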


\textbf{Remark}: Since $\tau^2=1$, we know that the above identity is equivalent to 
\begin{equation}\label{eq:fundamental-1}
[\partial/\partial z^j, *]=*[\Lambda, \theta_{z^j}].
\end{equation}
We can also define  $*$, $\omega$, $\Lambda$ and $Y:=[\omega,\Lambda]$ for a general polarized Hodge-Lefschetz module (see Definition 2.3 in \cite{Cattani08}, see also \cite{LL97}). Since $\{\omega, \Lambda, Y\}$ is an 
$sl_2$-triple, the commutators between them are clear. In general,  assume we have a smooth family of polarized Hodge-Lefschetz module structures whoes associated $sl_2$-triples are $\{\omega(t), \Lambda(t), Y(t)\}_{t\in \mathbb R}$ (with $*(t)$ as the corresponding star operators), put
\begin{equation}
\{\omega, \Lambda, Y, *\}: t\mapsto \{\omega(t), \Lambda(t), Y(t), *(t)\}
\end{equation}

\begin{definition} We call the commutators between $\partial/\partial t$ and $\{\omega, \Lambda, Y, *\}$ the fundamental commutators.
\end{definition}

\textbf{Remark}: Since
\begin{equation}
[\partial/\partial t, Y]=0, \ [\partial/\partial t, \omega]=\partial\omega/\partial t.
\end{equation}
we only have two non-trivial fundamental commutators. In the third section, we shall prove that \eqref{eq:fundamental-1} can also be used to prove the following identity:

\begin{proposition}\label{pr:main-1} $\theta_{z^k}^*=[\Lambda, \frac{\partial}{\partial \bar z^k}]=-\frac12 [\Lambda, [\Lambda, \overline{\theta_{z^k}}]]$.
\end{proposition}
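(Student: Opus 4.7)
The plan is to reduce both equalities of Proposition \ref{pr:main-1} to the single pointwise identity $\partial\Lambda/\partial t^k=\frac{1}{2}[\Lambda,[\Lambda,e_k]]$, and to establish that identity by combining the fundamental identity \eqref{eq:fundamental-1} with a Jacobi-plus-degree argument for the Lefschetz $sl_2$-action. First I would note that $\theta_{z^k}=\frac{1}{2}e_k$ is constant in $z$ and $e_k\in H^{1,1}(X,\mathbb R)$ is real, so $\overline{\theta_{z^k}}=\theta_{z^k}$; and that because $\Lambda=\Lambda(\omega(t))$ depends only on $t=\operatorname{Re}z$, the bracket $[\Lambda,\partial/\partial\bar z^k]$ collapses to the multiplication operator $-\partial\Lambda/\partial\bar z^k=-\frac{1}{2}\,\partial\Lambda/\partial t^k$. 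In these terms the two claimed equalities become $\theta_{z^k}^*=-\frac{1}{2}\,\partial\Lambda/\partial t^k$ and $\theta_{z^k}^*=-\frac{1}{4}[\Lambda,[\Lambda,e_k]]$, equivalently $e_k^*=-\partial\Lambda/\partial t^k$ and $e_k^*=-\frac{1}{2}[\Lambda,[\Lambda,e_k]]$, where $e_k^*$ denotes the pointwise adjoint of $u\mapsto e_k u$ with respect to $h$.

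Second, I would differentiate the pointwise relation $\Lambda=*^{-1}\omega *$ (which encodes the defining property $\omega^*=\Lambda$ of the Hodge pairing $h(u,v)=\int_X u\wedge\overline{*v}$) with respect to $t^k$. Using $\partial\omega/\partial t^k=e_k$, the fundamental identity \eqref{eq:fundamental-1} in the form $\partial*/\partial t^k=*[\Lambda,e_k]$, its consequence $\partial(*^{-1})/\partial t^k=-[\Lambda,e_k]\,*^{-1}$, together with the analogous formula $e_k^*=*^{-1}e_k *$ for the adjoint of multiplication by the real $(1,1)$-class $e_k$, the product rule yields
\[
\partial\Lambda/\partial t^k\;=\;e_k^*\;+\;[\Lambda,[\Lambda,e_k]].
\]
This automatically shows that the two identities displayed above are equivalent and reduces the problem to showing that $D_k:=e_k^*+\frac{1}{2}[\Lambda,[\Lambda,e_k]]$ vanishes.

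To conclude, I would prove $D_k=0$ by a Jacobi-plus-degree argument. On the one hand $D_k$ has bidegree $(-1,-1)$ and hence $Y$-weight $-2$, so $[Y,D_k]=-2D_k$. On the other hand, Jacobi gives $[Y,D_k]=[[\omega,\Lambda],D_k]=[\omega,[\Lambda,D_k]]-[\Lambda,[\omega,D_k]]$, which vanishes as soon as $[\omega,D_k]=[\Lambda,D_k]=0$. The commutator $[\Lambda,D_k]$ vanishes because $[\Lambda,e_k^*]=([\omega,e_k])^*=0$ (the wedge of two $(1,1)$-classes commutes) and because $e_k$ generates the three-dimensional adjoint-$sl_2$-irrep of highest weight $2$, so $[\Lambda,[\Lambda,[\Lambda,e_k]]]=0$. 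The commutator $[\omega,D_k]$ vanishes by the short Jacobi computation $[\omega,[\Lambda,[\Lambda,e_k]]]=2[\Lambda,e_k]$ (via $[\omega,e_k]=0$ and $[Y,e_k]=2e_k$), combined with $[\omega,e_k^*]=-[\Lambda,e_k]$, obtained by applying $[\omega,\cdot]$ to the displayed relation above and using $[\omega,\partial\Lambda/\partial t^k]=[\Lambda,e_k]$, which in turn follows from differentiating the $sl_2$-relation $[\omega,\Lambda]=Y$. Combining $-2D_k=[Y,D_k]=0$ forces $D_k=0$, so both equalities of Proposition \ref{pr:main-1} follow. The main obstacle is precisely this last uniqueness step: the product-rule derivation from \eqref{eq:fundamental-1} alone determines $e_k^*$ only modulo the $sl_2$-commutant, and it is the algebraic cancellation in $[Y,D_k]$, enabled by the fact that the adjoint $sl_2$-irrep generated by $e_k$ has dimension only three, that ultimately pins down the correct splitting between $e_k^*$ and $\partial\Lambda/\partial t^k$.
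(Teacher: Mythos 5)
Your proposal is correct, and while the first half retraces the paper's reduction, the second half proves the key identity by a genuinely different mechanism. Like the paper, you use Theorem \ref{th:main} to tie the three quantities together: your differentiation of $\Lambda=*^{-1}\omega\,*$ via $\partial *yield/\partial t^k=*[\Lambda,e_k]$ produces exactly the relation $\theta_{z^k}^*=-[\Lambda,[\Lambda,\overline{\theta_{z^k}}]]-[\Lambda,\partial/\partial\bar z^k]$ that the paper obtains by taking adjoints in $\theta_{z^k}=[\partial/\partial z^k,\omega]$, and this makes the two asserted equalities equivalent to the single statement $D_k:=e_k^*+\tfrac12[\Lambda,[\Lambda,e_k]]=0$, i.e.\ to \eqref{eq:3.1}. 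Where you diverge is in proving \eqref{eq:3.1} itself: the paper computes both sides explicitly on $\omega_r u$ with $u$ primitive, expanding $\overline{\theta_{z^k}}u=a+\omega b+\omega^2 c$ and matching the coefficients of $\omega_{r-2}a$, $\omega_{r-1}b$, $\omega_r c$ via \textbf{Fact 1} and \textbf{Fact 2}; you instead show that $D_k$ commutes with both $\omega$ and $\Lambda$ (using $[\omega,e_k]=0$, the three-dimensionality of the adjoint $sl_2$-string through $e_k$, and $[\omega,e_k^*]=-[\Lambda,e_k]$ obtained by differentiating $[\omega,\Lambda]=Y$), hence with $Y$ by Jacobi, while carrying $\operatorname{ad}Y$-weight $-2$, which forces $D_k=0$. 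Each route has its merits: the paper's is elementary and self-contained at the level of explicit Lefschetz coefficients, whereas yours is coefficient-free, explains the factor $-\tfrac12$ conceptually as the unique weight-compatible splitting of $\partial\Lambda/\partial t^k$, and transfers verbatim to an arbitrary smooth family of polarized Hodge--Lefschetz modules, which is precisely the generalization the author defers to a later paper. Two small points to tidy: the justification $[\Lambda,e_k^*]=([\omega,e_k])^*$ should read $[\Lambda,e_k^*]=-([\omega,e_k])^*$ (harmless, as both sides vanish), and you should state explicitly that $\operatorname{End}(\oplus H^{p,q})$ is finite dimensional so that the adjoint $sl_2$-action is a finite-dimensional representation and the highest-weight argument for $(\operatorname{ad}\Lambda)^3e_k=0$ applies.
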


Thus the identity in Theorem \ref{th:main} is essentially the only 
fundamental commutator. Let us compare it with the classical K\"ahler identity (see \cite{Demailly} and \cite{Wells} for the proof, see also \cite{Weil})
\begin{equation}\label{eq:kahler}
\dbar^*= i[\partial, \Lambda],
\end{equation}
on a K\"ahler manifold. 
The following trivial identity
\begin{equation}
\dbar=\sum d\bar z^j \wedge \frac{\partial}{\partial \bar z^j}.
\end{equation}
builds the bridge between \eqref{eq:kahler} and our K\"ahler identity. Because then, we can write
\begin{equation}
\dbar^*=\sum (\frac{\partial}{\partial \bar z^j})^* ( d\bar z^j \wedge)^*.
\end{equation}
Thus the adjoint of $\dbar$ is essentially a composition of two operators:  $(\frac{\partial}{\partial \bar z^j})^*$ and $ (d\bar z^j \wedge)^*$. It is well known that (see formula 1 in \cite{OT87} and its applications)
\begin{equation}\label{eq:k-id-1}
( d\bar z^j \wedge)^*= i [\Lambda, (dz^j\wedge)].
\end{equation}
Thus one may say that our identity in Theorem \ref{th:main} is more primitive than the usual K\"ahler identity. A first application of Theorem \ref{th:main} is a quick proof of Theorem \ref{th:cone-higgs}.

\subsection{Proof of Theorem \ref{th:cone-higgs}} By Theorem \ref{th:main} and Proposition \ref{pr:main-1}, we have
\begin{equation}
 [\partial_{z^j}^h,  \frac{\partial}{\partial \bar z^k}]=[[\Lambda, \theta_{z^j}], \frac{\partial}{\partial \bar z^k}]=[[\Lambda,\frac{\partial}{\partial \bar z^k}], \theta_{z^j}]=[\theta_{z^k}^*, \theta_{z^j}],
\end{equation}
which is equivalent to $\Theta^h+\theta\theta^*+\theta^*\theta=0$. By a similar argument, we also have $\partial^h\theta +\theta\partial^h=0$ and $\dbar\theta^*+\theta^*\dbar=0$. Thus we know that $(D^h)^2=(\dbar+\partial^h+\theta+\theta^*)^2=0$. The proof of Theorem \ref{th:cone-higgs} is complete.

\medskip

\textbf{Remark}: Just like the variation of Hodge structure case, the following corollary a direct consequence of Theorem \ref{th:cone-higgs}.
 
\begin{corollary} The curvature $\Theta^h$ of the connection on $H^{0,0}$ is $-\theta^*\theta$. In particular, it implies that $H^{0,0}$ is a positive line bundle.
\end{corollary}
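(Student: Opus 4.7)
The plan is to derive the curvature formula directly from Theorem \ref{th:cone-higgs} by identifying which piece of $\theta\theta^{*}+\theta^{*}\theta$ survives on $H^{0,0}$, and then to upgrade the resulting identity $\Theta^{h}=-\theta^{*}\theta$ to genuine positivity of the line bundle by unpacking it in coordinates and recognising a Gram matrix.

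The first step is to record the bidegree behaviour of the Higgs field. Since $\theta(\alpha)u=\alpha u$ with $\alpha\in H^{1,1}(X,\C)$, multiplication in cohomology shows $\theta_{z^{j}}(H^{p,q})\subset H^{p+1,q+1}$; taking adjoints with respect to $h$ (which respects the Lefschetz bigrading) gives $\theta_{z^{j}}^{*}(H^{p,q})\subset H^{p-1,q-1}$. In particular $\theta^{*}|_{H^{0,0}}=0$, so the piece $\theta\theta^{*}$ vanishes on $H^{0,0}$ and Theorem \ref{th:cone-higgs} collapses to $\Theta^{h}|_{H^{0,0}}=-\theta^{*}\theta|_{H^{0,0}}$. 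This already proves the first assertion.

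For positivity, $H^{0,0}(X,\C)=\C\cdot 1$ is one-dimensional, so $H^{0,0}$ is a line bundle; pick a nowhere-zero local section $u$. Using $\theta=\sum dz^{j}\wedge\theta_{z^{j}}$ with $\theta_{z^{j}}=\tfrac{1}{2}e_{j}$ and the graded-wedge expansion $\theta\wedge\theta^{*}+\theta^{*}\wedge\theta=\sum dz^{j}\wedge d\bar z^{k}\,[\theta_{z^{j}},\theta_{z^{k}}^{*}]$, the vanishing of $\theta_{z^{k}}^{*}$ on $H^{0,0}$ leaves only the $-\theta_{z^{k}}^{*}\theta_{z^{j}}$ term of the commutator. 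The coefficient of $i\Theta^{h}$ in front of $i\,dz^{j}\wedge d\bar z^{k}$ then reads $M_{jk}=\tfrac{1}{4}\,h(e_{j}u,e_{k}u)/h(u,u)$, which is the Gram matrix of the vectors $\tfrac{1}{2}e_{j}u\in H^{1,1}$ and hence Hermitian positive semi-definite.

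The last step is to rule out degeneracy, i.e.\ to prove that $e_{1}u,\dots,e_{N}u$ are linearly independent in $H^{1,1}(X,\C)$. Since $u$ is a nonzero scalar multiple of the unit class $1\in H^{0,0}(X,\C)$, one has $e_{j}u=c\,e_{j}$ for some $c\neq 0$, so the question reduces to the $\C$-linear independence of the chosen $\R$-basis $\{e_{j}\}$ of $H^{1,1}(X,\R)$ inside $H^{1,1}(X,\C)$, which follows from the conjugation-involution decomposition $H^{1,1}(X,\C)=H^{1,1}(X,\R)\otimes_{\R}\C$. The only nontrivial bookkeeping is the sign convention in unpacking $-\theta^{*}\theta$ as a $(1,1)$-form to verify that $i\Theta^{h}$ (rather than $-i\Theta^{h}$) is the positive one; otherwise the corollary is an immediate consequence of Theorem \ref{th:cone-higgs} together with the bigrading action of $\theta$.
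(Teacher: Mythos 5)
Your argument is correct and is essentially the paper's own: the corollary is presented as a direct consequence of Theorem \ref{th:cone-higgs}, obtained exactly as you do by observing that $\theta^{*}$ lowers the Hodge bidegree and therefore vanishes on $H^{0,0}$, so the identity $\Theta^{h}+\theta\theta^{*}+\theta^{*}\theta=0$ collapses to $\Theta^{h}=-\theta^{*}\theta$ there, with positivity coming from the injectivity of $\alpha\mapsto\alpha\cdot 1$. Your Gram-matrix check of strict positivity fills in detail the paper leaves implicit (the paper instead corroborates the formula by the independent primitive-decomposition computation $(\Theta_{\zeta\zeta}1,1)=\|\theta_{\zeta}\|^{2}$ in Section 2.1).
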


In section 2.1, we shall show that the curvature of $H^{0,0}$ can be seen as an effective version of the Brunn-Minkowski type inequalities on a compact K\"ahler manifold. There is also another deep and beautiful complex version of the Brunn-Minkowski theory of Berndtsson, see \cite{Bern06} and \cite{Bern09}. We shall study their relations in \cite{Wang-Kahler}, in particular, we shall show how to use Theorem \ref{th:main}  to explain our main results in \cite{Wang16} and \cite{BPW16}. 

\subsection{Generalizations and other related results} The proof of Theorem \ref{th:main} will be given in the third section. In a future publication \cite{Wang-Kahler}, we shall study generalizations of Theorem \ref{th:main} for a general smooth family of polarized Hodge-Lefschetz module structures. There is also a mixed version of the Hodge-Riemann bilinear relation. Its origin lies in the convex case, i.e. the so called Alexandrov-Fenchel inequality (see the next section for the background). There is also an Alexandrov-Fenchel inequality for the mixed discriminant, first proved by Alexandrov, and then it was generalized by Timorin in \cite{Timorin98} to a mixed Hodge-Riemann bilinear relation associated to the exterior algebra of a fixed point in a Hermitian manifold. Later, Dinh and Nguy\^en \cite{DN06} proved that Timorin's result is also true globally, more precisely, they gave a mixed Hodge-Riemann bilinear relation for every compact K\"ahler manifold. Dinh-Nguy\^en's result was further generalized to general polarized Hodge-Lefschetz modules by Cattani in \cite{Cattani08}. The relation between the convex case and the compact K\"ahler case was first studied by Khovanskii \cite{Khovanskii88} and Gromov \cite{Gromov90}, see \cite{McMullen93},  \cite{KK12} and \cite{KK12-1} and references therein for other interesting developments.

\subsection{Acknowledgements} 

I would like to thank Professor Bo Berndtsson for many inspiring discussions on the Alexandrov-Fenchel inequality, which is the starting point of this paper. I would also like to thank Professor Takeo Ohsawa, who kindly gave me a background of our identity in Theorem \ref{th:main}: in 1990 (ICM Kyoto), Professor Jean-Michel Kantor has already talked with him about our identity. I would also like to thank Professor Mihai P\u aun for sending me the reference \cite{M12}. Thanks are also given to Professor Bo-Yong Chen and Professor Qing-Chun Ji for their constant support and encouragement.

\section{Background and examples}

\subsection{Brunn-Minkowski inequality and curvature of $H^{0,0}$} We believe that this subsection is essentially well known, but we shall still write it down as the background and starting point of this paper. We shall follow Berndtsson's notes on convex and complex geometry (available in his homepage) in this section. Let us first recall the classical Brunn-Minkowski inequality. 

\medskip

\textbf{Classical Brunn-Minkowski theory}: Let $A_0$, $A_1$ be two convex bodies in $\mathbb R^n$, i.e. compact convex sets with non-empty interior. Their Minkowski sum is defined as follows
\begin{equation*}
A_0+A_1:=\{a_0+a_1: a_0\in A_0, \ a_1\in A_1\}.
\end{equation*}
The Brunn-Minkowski theorem reads as follows:

\begin{theorem}[Brunn-Minkowski inequality] $|A_0+A_1|^{1/n} \geq |A_0|^{1/n} +|A_1|^{1/n}$, where the absolute value of a convex body means its volume (Lebesgue measure).  
\end{theorem}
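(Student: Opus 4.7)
The plan is to deduce the additive Brunn--Minkowski inequality from its functional analogue, the Pr\'ekopa--Leindler inequality, following the approach of Berndtsson's notes referenced by the author.

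First I would establish the Pr\'ekopa--Leindler inequality: for non-negative measurable $f,g,h$ on $\R^n$ and $t\in(0,1)$ satisfying
\[
h((1-t)x+ty)\geq f(x)^{1-t}g(y)^{t}\quad\text{for all }x,y\in\R^n,
\]
one has $\int h\geq(\int f)^{1-t}(\int g)^{t}$. I would proceed by induction on $n$. For $n=1$, after normalizing $\sup f=\sup g=1$, the super-level sets $F_s=\{f>s\}$, $G_s=\{g>s\}$, $H_s=\{h>s\}$ satisfy $(1-t)F_s+tG_s\subset H_s$. The one-dimensional Brunn--Minkowski inequality for measurable subsets of $\R$ (proved by approximating with finite unions of intervals and translating them so they have disjoint interiors) then yields $|H_s|\geq(1-t)|F_s|+t|G_s|\geq|F_s|^{1-t}|G_s|^{t}$ via AM--GM, and integrating in $s$ followed by H\"older's inequality finishes this case. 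The inductive step applies the $(n-1)$-dimensional version on horizontal slices and then the one-dimensional version to the resulting marginal functions of the last coordinate via Fubini.

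Next I would apply Pr\'ekopa--Leindler to the characteristic functions $f=\chi_{A_0}$, $g=\chi_{A_1}$, $h=\chi_{(1-t)A_0+tA_1}$. The hypothesis holds tautologically from the definition of Minkowski sum, yielding the multiplicative Brunn--Minkowski inequality
\[
|(1-t)A_0+tA_1|\geq|A_0|^{1-t}|A_1|^{t}.
\]

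Finally, to convert this to the stated additive inequality, I would exploit the homogeneity $|\lambda A|=\lambda^{n}|A|$. Setting
\[
\lambda_j=\frac{|A_j|^{1/n}}{|A_0|^{1/n}+|A_1|^{1/n}},\qquad A_j':=A_j/\lambda_j,
\]
one computes $|A_0'|=|A_1'|=(|A_0|^{1/n}+|A_1|^{1/n})^{n}$ and $A_0+A_1=\lambda_0 A_0'+\lambda_1 A_1'$ with $\lambda_0+\lambda_1=1$; applying the multiplicative form with $t=\lambda_1$ gives $|A_0+A_1|\geq(|A_0|^{1/n}+|A_1|^{1/n})^{n}$, and taking $n$-th roots finishes the proof. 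The main obstacle throughout is the one-dimensional Brunn--Minkowski inequality for general measurable sets, which underlies the entire argument; once that technicality is dispatched by the translation-and-approximation trick, the remaining steps reduce to clean functional-analytic manipulations.
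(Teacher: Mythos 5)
The paper itself offers no proof of this statement: it is quoted as classical background (from Berndtsson's notes on convex and complex geometry) whose role is only to motivate the curvature computation for $H^{0,0}$, so there is nothing in the text to compare against line by line. Your route through the Pr\'ekopa--Leindler inequality is the standard one in exactly those notes, and the global architecture is sound: the one-dimensional measure-theoretic inequality $|A+B|\geq |A|+|B|$ via the translation trick, induction on dimension through Fubini, specialization to characteristic functions to get the multiplicative form $|(1-t)A_0+tA_1|\geq |A_0|^{1-t}|A_1|^{t}$, and the homogeneity renormalization $\lambda_j=|A_j|^{1/n}/(|A_0|^{1/n}+|A_1|^{1/n})$ to recover the additive statement. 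That last computation is correct and is the step most often garbled; you have it right.

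One step in your $n=1$ argument is written in the wrong order and, as stated, does not go through. You pass from $|H_s|\geq(1-t)|F_s|+t|G_s|$ to $|H_s|\geq|F_s|^{1-t}|G_s|^{t}$ by AM--GM and then propose to ``integrate in $s$ followed by H\"older's inequality.'' But H\"older bounds $\int_0^1|F_s|^{1-t}|G_s|^{t}\,ds$ from \emph{above} by $\bigl(\int_0^1|F_s|\,ds\bigr)^{1-t}\bigl(\int_0^1|G_s|\,ds\bigr)^{t}$, which is the wrong direction; you would need a reverse H\"older inequality that is false in general. The fix is to postpone AM--GM: integrate the arithmetic-mean bound to get $\int h\geq\int_0^1|H_s|\,ds\geq(1-t)\int f+t\int g$ (here the normalization $\sup f=\sup g=1$ guarantees the level sets are nonempty for $s<1$ and that the layer-cake integrals stop at $s=1$), and only then apply the weighted AM--GM inequality $(1-t)a+tb\geq a^{1-t}b^{t}$ once, to the two integrals. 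With that reordering the induction closes and the rest of your argument is complete.
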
 

Put $A_t:=tA_1+(1-t)A_0$, then the above theorem is equivalent to that
\begin{equation}
t\mapsto -|A_t|^{1/n},
\end{equation}
is convex on $(0,1)$. Since $|cA_t|=c^n |A_t|$, we also know that the Brunn-Minkowski theorem is equivalent to that
\begin{equation}
t\mapsto -\log |A_t|,
\end{equation}
is convex. On the other hand, if $A$ is a convex body in $\mathbb R^n$ and $\phi$ is a strictly convex function of class $C^2$  on $\mathbb R^n$ such that
the image of
\begin{equation}
\partial \phi: x\mapsto y=\partial \phi(x):=(\partial\phi/\partial x^1, \cdots, \partial\phi/\partial x^n),
\end{equation}
is just the interior of $A$ (for this, it suffices to take $\phi$ as the Legendre transform of a smooth strictly convex function in $A$ that tends to infinity at the boundary of $A$), then we have
\begin{equation}
|A|=\int_{A} dy=\int_{\mathbb R^n} MA(\phi) dx, \ dx:=dx^1\wedge \cdots \wedge dx^n, \  dy:=dy^1\wedge \cdots \wedge dy^n.
\end{equation} 
where $MA(\phi)$ denotes the determinant of the Hessian of $\phi$. In general, we have
\begin{equation}
p(t):=|t_1 A_1+\cdots +t_n A_n|=\int_{\mathbb R^n} MA(t_1\phi_1+\cdots +t_n\phi_n) dx.
\end{equation}
We call the coefficent of $t_1\cdots t_n$ in the polynomial $p(t)$ the mixed volume, say $V(A_1, \cdots, A_n)$, of $A_1, \cdots, A_n$. Then we have the following Alexandrov-Fenchel theorem as a generalization of the Brunn-Minkowski theorem: 

\begin{theorem}[Alexandrov-Fenchel inequality] $$V(A_1, \cdots, A_n)^2\geq V(A_1,A_1, A_3,\cdots, A_n) V(A_2,A_2, A_3,\cdots, A_n).$$  
\end{theorem}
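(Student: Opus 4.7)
The plan is to deduce the Alexandrov-Fenchel inequality from the (mixed) Hodge-Riemann bilinear relations on a compact K\"ahler manifold, following the Khovanskii-Teissier-Gromov program. First, by continuity of mixed volumes in the Hausdorff distance and density of rational polytopes among convex bodies, it suffices to prove the inequality when each $A_i$ is a rational polytope.

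In that case, the Bernstein-Kushnirenko correspondence produces a smooth projective toric variety $X$ of complex dimension $n$ together with nef $(1,1)$-classes $[\omega_i]\in H^{1,1}(X,\R)$ satisfying
\[
n!\,V(A_{i_1},\ldots,A_{i_n}) \;=\; \int_X \omega_{i_1}\wedge\cdots\wedge\omega_{i_n}
\]
(taking a toric resolution of singularities if needed). Under this dictionary, the Alexandrov-Fenchel inequality becomes the statement that the symmetric bilinear form
\[
Q(\alpha,\beta) \;:=\; \int_X \alpha\wedge\beta\wedge\omega_3\wedge\cdots\wedge\omega_n
\]
on $H^{1,1}(X,\R)$ has nonpositive determinant when restricted to the two-plane $\R[\omega_1]+\R[\omega_2]$.

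This nonpositivity is exactly the content of the mixed Hodge-Riemann bilinear relation (Timorin, Dinh-Nguy\^en, Cattani, cited in section 1.6): when all $\omega_j$ are K\"ahler, $Q$ has signature $(1,h^{1,1}(X)-1)$, with the positive direction spanned by any fixed auxiliary K\"ahler class $\omega$ and the negative part being the $\omega$-primitive subspace. Since $Q(\omega_i,\omega_i)>0$ for $i=1,2$ while the maximal $Q$-positive subspace is one-dimensional, the restriction of $Q$ to $\R[\omega_1]+\R[\omega_2]$ has signature at most $(1,1)$, forcing $\det \leq 0$. To pass from K\"ahler to merely nef $\omega_j$ one perturbs $\omega_j\mapsto \omega_j+\varepsilon\omega$ and lets $\varepsilon\to 0$.

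The main obstacle is the mixed Hodge-Riemann relation itself: unlike the classical case, which polarizes by a single power $\omega^{n-k}$, here one must control the signature of $Q$ polarized by a \emph{product} $\omega_3\cdots\omega_n$ of distinct K\"ahler classes. The standard route is a connectedness-plus-semicontinuity argument along a path $(\omega_3,\ldots,\omega_n)\rightsquigarrow(\omega,\ldots,\omega)$ inside the K\"ahler cone, using the hard Lefschetz property at each intermediate point to rule out degeneration of $Q$. In the spirit of the present paper, a more appealing alternative would be to derive Alexandrov-Fenchel directly from the positivity of $H^{0,0}$ on $\mathcal K_\C$ (a consequence of Theorem \ref{th:cone-higgs}): polarizing the log-concavity of $|A_t|$ in $t\in\R^n$ should yield the full mixed inequality, provided the curvature of $H^{0,0}$ can be computed accurately in terms of mixed intersection numbers.
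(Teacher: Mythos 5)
The paper itself does not prove this theorem: it is quoted as classical background at the start of Section 2.1 (following Berndtsson's lecture notes), immediately reformulated as a log-convexity statement, and then only its K\"ahler-manifold analogues are discussed, so there is no in-paper argument to measure your proposal against. Judged on its own, your outline follows the standard Khovanskii--Teissier--Gromov route, and its elementary parts are sound: the reduction to rational polytopes by continuity of mixed volumes, the passage to a single smooth projective toric variety via a common smooth refinement of normal fans, the dictionary $n!\,V(A_{i_1},\dots,A_{i_n})=\int_X\omega_{i_1}\wedge\cdots\wedge\omega_{i_n}$, the signature argument (a symmetric form on a two-plane that takes a positive value yet is not positive definite has nonpositive Gram determinant), and the nef-to-K\"ahler perturbation are all correct. (One small imprecision: in the mixed setting the negative part of $Q$ is the primitive subspace relative to the polarization $\omega_3\cdots\omega_n$ together with an auxiliary class, not simply the $\omega$-primitive subspace; but only the signature $(1,h^{1,1}-1)$ is actually used, so this is immaterial.)

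The genuine gap is the one you flag yourself: the entire content of Alexandrov--Fenchel is carried by the mixed Hodge--Riemann bilinear relation for the polarization $\omega_3\cdots\omega_n$ with \emph{distinct} K\"ahler classes, and you do not prove it --- you cite Timorin, Dinh--Nguy\^en and Cattani and compress the deformation-plus-nondegeneracy argument into one sentence. As written, the proposal is an accurate road map, not a proof. Moreover, your closing suggestion --- to extract the full mixed inequality from the positivity of $H^{0,0}$ established in this paper --- does not work as stated: the curvature computation $\Theta_{\zeta\zeta}=\theta_\zeta^*\theta_\zeta$ is carried out for the metric polarized by powers of the single class $\omega(t)$, so it yields exactly the convexity of $t\mapsto-\log V(\omega(t),\dots,\omega(t))$, i.e.\ the Brunn--Minkowski/Minkowski-type case $\omega_3=\cdots=\omega_n$. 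Recovering the full Alexandrov--Fenchel (equivalently, the general Khovanskii--Teissier) inequality along these lines would require a mixed-polarization version of the Higgs bundle and its metric, which is precisely the mixed Hodge--Riemann input you were hoping to bypass.
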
 

Put $A_t:= tA_1+(1-t)A_2$, then the above inequality is equivalent to the convexity of
\begin{equation}
t\mapsto -V(A_t, A_t, A_3,\cdots, A_n)^{1/2},
\end{equation}
which is also equivalent to the convexity of
\begin{equation}
t\mapsto -\log V(A_t, A_t, A_3,\cdots, A_n).
\end{equation}
There is also an analogy of the above theory for compact K\"ahler manifold (first studied by Khovanskii and Teissier). 

\medskip

\textbf{Brunn-Minkowski theory for compact K\"ahler manifold}: Let $X$ be a compact K\"ahler  manifold. Let $\omega_1, \cdots, \omega_n$ be K\"aher classes on $X$. Put
\begin{equation}
V(\omega_1, \cdots, \omega_n):=\int_X \bm \omega_1\wedge\cdots \wedge \bm\omega_n.
\end{equation}
Then we have the following Brunn-Minkowski theorem for compact K\"ahler manifold (compare with Minkowski's second inequality in the convex case):

\begin{theorem} $$V(\omega_1, \omega_2, \cdots, \omega_2)^2\geq V(\omega_1,\omega_1, \omega_2,\cdots, \omega_2) V(\omega_2, \cdots, \omega_2).$$  
\end{theorem}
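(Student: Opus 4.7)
The plan is to obtain this Khovanskii--Teissier inequality directly from the classical Hodge--Riemann bilinear relation on $H^{1,1}(X,\R)$. Regard $\omega:=\omega_2$ as a fixed polarizing K\"ahler class and set $\alpha:=\omega_1\in H^{1,1}(X,\R)$. Abbreviate
$$a:=V(\omega_2,\ldots,\omega_2),\qquad b:=V(\omega_1,\omega_2,\ldots,\omega_2),\qquad c:=V(\omega_1,\omega_1,\omega_2,\ldots,\omega_2),$$
so the asserted inequality reads $b^2\geq ac$.

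First I would apply the Lefschetz decomposition of $\alpha$ at weight $(1,1)$ relative to $\omega$, writing uniquely
$$\alpha=\lambda\,\omega+\alpha_0,\qquad \lambda\in\R,\quad \alpha_0\wedge\omega^{n-1}=0.$$
Wedging with $\omega^{n-1}$ and integrating pins down $\lambda=b/a$. Squaring $\alpha$, wedging with $\omega^{n-2}$, and using the primitivity relation $\int_X\alpha_0\wedge\omega^{n-1}=0$ to kill the cross term yields
$$c=\lambda^2\,a+\int_X\alpha_0^2\wedge\omega^{n-2}.$$

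Next I would invoke the Hodge--Riemann bilinear relation on the middle weight: on the subspace of real $\omega$-primitive $(1,1)$-classes the quadratic form $\beta\mapsto\int_X\beta^2\wedge\omega^{n-2}$ is negative semi-definite, since for a real primitive $(1,1)$-class $\beta$ the sign computation $i^{p-q}(-1)^{k(k-1)/2}=-1$ with $p=q=1$, $k=2$ flips the positivity of $\int_X\beta\wedge\overline\beta\wedge\omega^{n-2}$. Hence the final integral above is $\leq 0$, giving $c\leq \lambda^2 a=b^2/a$, i.e.\ $b^2\geq ac$.

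The main obstacle is not any of the trivial algebra above but the Hodge--Riemann inequality itself, a classical but non-elementary ingredient which rests on the $sl_2$-triple $\{\omega,\Lambda,Y\}$ together with the positivity of the star-pairing on primitive $(1,1)$-forms. Once this is granted the proof takes three lines. This same Hodge--Riemann negativity on the primitive part is precisely what underlies the positivity of $H^{0,0}$ recorded in the corollary above, so the inequality sits naturally inside the polarized Hodge--Lefschetz framework of the paper and should be regarded as the simplest concrete shadow of the flat Higgs bundle structure on $(H,h)$.
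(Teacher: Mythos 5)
Your proof is correct. The two ingredients you use --- the Lefschetz decomposition $\omega_1=\lambda\,\omega_2+\alpha_0$ with $\alpha_0$ primitive, and the Hodge--Riemann negativity of $\beta\mapsto\int_X\beta^2\wedge\omega_2^{n-2}$ on real primitive $(1,1)$-classes --- are exactly the ones the paper relies on, but the packaging differs. The paper does not prove this theorem directly: it records it as known background, observes that it is equivalent to the convexity of $t\mapsto-\log V(\omega(t),\dots,\omega(t))$ along the segment $\omega(t)=(1-t)\omega_1+t\omega_2$, and then establishes that convexity (in the sharper, ``effective'' form $(-\log|X|)_{\zeta\zeta}=\|\theta_\zeta\|^2/|X|$) by computing the curvature of the line bundle $H^{0,0}$; inside that curvature computation one finds precisely your decomposition ($\theta_\zeta=\omega a_\zeta+b_\zeta$) and precisely your sign argument ($\int_X b_\zeta^2\,\omega_{n-2}=-\|b_\zeta\|^2$). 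Your route is more economical: it is a static, pointwise Cauchy--Schwarz-type argument requiring no differentiation in $t$ and no connection or curvature formalism, so it isolates the inequality from the bundle-theoretic framework. What the paper's route buys in exchange is quantitative information --- the second derivative of $-\log V$ is identified exactly as $\|\theta_\zeta\|^2/|X|$, i.e.\ the norm of the Higgs field, which pins down the equality case ($\theta_\zeta=0$) and is the form of the statement that feeds into the flat Higgs bundle structure and the curvature estimates later in the paper. One small point worth making explicit in your write-up: you need $a=\int_X\omega_2^n>0$ to pass from $c\leq\lambda^2 a$ to $b^2\geq ac$, which of course holds since $\omega_2$ is K\"ahler.
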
 

Put $\omega(t):=(1-t) \omega _1+t\omega_2$. Then we know that the above inequality is equivalent to the convexity of 
\begin{equation}
t\mapsto -\log V(\omega(t), \cdots, \omega(t)).
\end{equation}
In general we have the following Khovanskii-Teissier theorem:

\begin{theorem}[Khovanskii-Teissier inequality] $$V(\omega_1, \cdots, \omega_n)^2\geq V(\omega_1,\omega_1, \omega_3,\cdots, \omega_n) V(\omega_2,\omega_2, \omega_3,\cdots, \omega_n).$$  
\end{theorem}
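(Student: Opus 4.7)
The plan is to extract the Khovanskii-Teissier inequality by first obtaining the ``pure'' case $\omega_3=\cdots=\omega_n$ as a direct application of the positivity of the line bundle $H^{0,0}$ from Corollary 1.4, and then reducing the fully mixed case to a Hodge-index argument.

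First I would compute the Hodge metric on the constant section $1\in H^{0,0}(X,\C)$. The Lefschetz decomposition of $1$ is trivial (primitive component $u^0=1$), so the defining formula for $h$ collapses to $h(1,1)(\omega+\sqrt{-1}\alpha) = \int_X \bm\omega_n = V(\omega,\ldots,\omega)/n!$, independent of the imaginary direction. By Corollary 1.4 the Chern potential $\phi := -\log h(1,1)$ is strictly plurisubharmonic on $\mathcal K_\C$; since $\phi$ depends only on $t=\Re z$, this is strict convexity on the real K\"ahler cone $\mathcal K$. Unpacking the explicit curvature formula $\Theta^h|_{H^{0,0}}=-\theta^*\theta$ evaluated on the trivializing section further yields the closed identity
\begin{equation*}
n\, V(\omega^{n-1},\alpha)^2 - (n-1)\, V(\omega^n)\, V(\omega^{n-2},\alpha^2) = (n-1)!\, V(\omega^n)\, h(\alpha,\alpha)_{H^{1,1}},
\end{equation*}
valid for every $\alpha\in H^{1,1}(X,\R)$.

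Next I would feed in the Lefschetz decomposition $\alpha=\alpha^0+\alpha^1\omega$ with $\alpha^0$ primitive $(1,1)$, compute $h(\alpha,\alpha)_{H^{1,1}}$ via the defining formula of $h$, and use $V(\omega^{n-1},\alpha^0)=0$ together with the classical Hodge-Riemann bilinear relation $\int_X \bm\omega^{n-2}\wedge(\bm\alpha^0)^2 \le 0$ on primitive $(1,1)$-classes. A short algebraic manipulation collapses the displayed identity into precisely the pure Khovanskii-Teissier inequality $V(\omega^{n-1},\alpha)^2 \ge V(\omega^n)\, V(\omega^{n-2},\alpha^2)$, which upon setting $\alpha=\omega_1$, $\omega=\omega_2$ is the case $\omega_3=\cdots=\omega_n=\omega_2$ of the theorem.

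Finally, for arbitrary $\omega_3,\ldots,\omega_n$ I would translate the claim into the Hodge-index statement that the bilinear form $B(\alpha,\beta):=\int_X\alpha\wedge\beta\wedge\bm\omega_3\wedge\cdots\wedge\bm\omega_n$ has signature $(1,h^{1,1}-1)$ on $H^{1,1}(X,\R)$, from which the required determinantal inequality $B(\omega_1,\omega_2)^2\ge B(\omega_1,\omega_1)\,B(\omega_2,\omega_2)$ follows by restriction to the $2$-dimensional subspace spanned by $\omega_1,\omega_2$. The main obstacle is producing the mixed signature from within the present framework: a natural route is to enlarge the base to a product of complexified K\"ahler cones and construct an analogous positive line bundle with fibre metric $\int_X\bm\omega_1\wedge\bm\omega_2\wedge\bm\omega_3\wedge\cdots\wedge\bm\omega_n/n!$, whose positivity would deliver the mixed signature in one step. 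Establishing this positivity seems to require a multiparameter generalization of the fundamental commutator identity of Theorem 1.3 — precisely the extension announced in section 1.6 for the sequel \cite{Wang-Kahler}. Short of that, one invokes the mixed Hodge-Riemann bilinear relation of Dinh-Nguy\^en and Cattani directly.
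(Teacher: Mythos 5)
The paper does not actually prove this statement: it is quoted in Section 2 as a classical theorem of Khovanskii and Teissier, and the paper's own computation (the curvature of $H^{0,0}$) only yields the pure case $\omega_3=\cdots=\omega_n$. Your proposal is correct and is consistent with, but goes beyond, what the paper supplies. Your closed identity $n\,V(\omega^{n-1},\alpha)^2-(n-1)\,V(\omega^n)\,V(\omega^{n-2},\alpha^2)=(n-1)!\,V(\omega^n)\,h(\alpha,\alpha)$ is exactly the paper's formula $(-\log|X|)_{\zeta\zeta}=\|\theta_\zeta\|^2/|X|$ rewritten in mixed-volume normalization, and your subsequent step is essential: curvature positivity alone only gives the $(n-1)/n$-weighted (log-convexity) inequality, and it is the Lefschetz decomposition $\alpha=a\omega+b$ together with the Hodge--Riemann relation $\int_X\bm\omega^{n-2}\wedge\bm b^2\le 0$ that upgrades it to the sharp form $V(\omega^{n-1},\alpha)^2\ge V(\omega^n)V(\omega^{n-2},\alpha^2)$ (at which point the curvature identity is no longer strictly needed). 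For the fully mixed case, your reduction to the signature $(1,h^{1,1}-1)$ of $B(\alpha,\beta)=\int_X\alpha\wedge\beta\wedge\bm\omega_3\wedge\cdots\wedge\bm\omega_n$ and restriction to the plane spanned by $\omega_1,\omega_2$ (using $B(\omega_1,\omega_1)>0$) is the standard and correct argument; but you are right that the signature statement is a genuinely independent input --- the mixed Hodge--Riemann bilinear relation of Dinh--Nguy\^en and Cattani --- which cannot be extracted from the single-cone Higgs bundle constructed in this paper. Invoking it is legitimate and is in effect what the paper itself does by citation; your alternative route via a positive line bundle over a product of cones would indeed suffice (log-concavity of $(t_1,t_2)\mapsto V(t_1\omega_1+t_2\omega_2,t_1\omega_1+t_2\omega_2,\omega_3,\dots,\omega_n)$ is equivalent to the determinantal inequality), but its positivity is not established here.
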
 

By the same reason, we know that the Khovanskii-Teissier inequality is  equivalent to the convexity of
\begin{equation}
t\mapsto -\log V(\omega(t),\omega(t), \omega_3,\cdots. \omega_n).
\end{equation}
In the next section, we shall show how to look at the above convexity properties as positivity properties of the curvature of $H^{0,0}$. In this way, we can move one step further, i.e. we can prove that the second order derivatives of the above functions like $-\log V(\omega(t), \cdots, \omega(t))$ are fully determined by the norm of our Higgs field.

\medskip

\textbf{Curvature of $H^{0,0}$}: Recall that our bundle $H^{0,0}$ is a trivial line bundle over the complexified K\"ahler cone $\mathcal K_\C$. Let us fix a base, say $\{e_j\}_{1\leq j \leq N}$ of $H^{1,1}(X,\mathbb R)$. Then $\mathcal K_\C$, can be seen as a convex tube domain (Siegel domain of the first kind), say $K_\C$, in $\C^N$. And we know that $z=t+is\in K_\C$ if and only if 
\begin{equation}
\omega(t):=\sum t^j e_j,
\end{equation}
has a K\"ahler form as a representative. We know that each $\omega(t)$ defines a Hermitian metric on $H^{p,q}(X,\C)$. In particular, 
\begin{equation}
\omega: z=t+is \mapsto \omega(t),  \ z\in K_\C,
\end{equation}
defines a Hermitian metric, say $h$, on $H^{0,0}$. Let us write the  Chern connection on $(H, h)$ as
\begin{equation}
D^h=\dbar+\partial^h=\sum d\bar z^j \wedge \frac{\partial}{\partial \bar z^j}, \ \partial^h=\sum dz^j \wedge \partial_{z^j}^h.
\end{equation}
Thus
\begin{equation}
\Theta^h:=(D^h)^2=\sum [\partial_{z^j}^h, \frac{\partial}{\partial \bar z^k}] dz^j \wedge d\bar z^k.
\end{equation}
Moreover, since $\omega$ does not depend on $s$, we have
\begin{equation}
 \frac{\partial}{\partial \bar z^j}=\frac12( \frac{\partial}{\partial  t^j}+ i \frac{\partial}{\partial s^j}), \ \partial_{z^j}^h=\frac12(\partial_{t^j}^h- i \frac{\partial}{\partial s^j}),
\end{equation}
where each $\partial_{t^j}^h$ is determined by
\begin{equation}
\frac{\partial}{\partial t^j} (u,v)=(\partial_{t^j}^h u, v)+(u, \partial v/\partial t^j).
\end{equation}
Thus we have
\begin{equation}
\partial_{t^j}^h=* \frac{\partial}{\partial t^j} *,
\end{equation}
and
\begin{equation}
\Theta^h:=\frac14 \sum \Theta_{jk}^h dz^j \wedge d\bar z^k, \ \Theta_{jk}^h:=[\partial_{t^j}^h, \frac{\partial}{\partial  t^k}]. 
\end{equation}
Let us denote by $1$ the canonical frame of $H^{0,0}$. We shall compute
\begin{equation}
\sum (\Theta_{jk}^h 1, 1)\zeta^j \zeta^k, \ \ \zeta \in \mathbb R^N. 
\end{equation}
Notice that
\begin{equation}
\Theta_{\zeta\zeta}:=\sum \Theta_{jk}^h \zeta^j \zeta^k=[\partial^h_\zeta, \partial_\zeta], \ \  \partial_\zeta:=\sum \zeta^j\frac{\partial}{\partial  t^j}, \ \partial^h_\zeta=*\partial_\zeta*.
\end{equation}
Put
\begin{equation}\label{eq:theta-def}
\theta_\zeta:=[\partial_\zeta, \omega]=\sum \zeta^je_j.
\end{equation}
We shall prove the following theorem:

\begin{theorem} $\Theta_{\zeta\zeta}=\theta_\zeta^* \theta_\zeta$.
\end{theorem}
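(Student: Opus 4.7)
The plan is to evaluate both sides of the identity on the canonical frame $1$ of the trivial line bundle $H^{0,0}$, reducing everything to the fundamental identities of Theorem~\ref{th:main} and Proposition~\ref{pr:main-1} translated into the real coordinates $t^j$. Because $*$, $\Lambda$, $\omega$, and $h$ all depend on the real parts $t^j$ but not on the imaginary parts $s^j$, substituting $\partial/\partial z^j = \tfrac12(\partial/\partial t^j - i\partial/\partial s^j)$ and $\partial^h_{z^j} = \tfrac12(\partial^h_{t^j} - i\partial/\partial s^j)$ into Theorem~\ref{th:main} makes the $s$-derivatives cancel and yields
\[
\partial^h_{t^j} = [\Lambda, e_j] + \partial/\partial t^j, \qquad \partial^h_\zeta = [\Lambda, \theta_\zeta] + \partial_\zeta .
\]
The analogous manipulation of Proposition~\ref{pr:main-1}, using $[\Lambda, \partial/\partial s^k] = 0$, gives the companion identity
\[
\theta_\zeta^* = [\Lambda, \partial_\zeta].
\]

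With these in hand, the computation is quick. Since $1$ is the constant frame we have $\partial_\zeta(1) = 0$, and since $H^{0,0}$ sits at the bottom of the Lefschetz filtration we have $\Lambda(1) = 0$. Therefore
\[
\partial^h_\zeta(1) = [\Lambda, \theta_\zeta](1) = \Lambda(\theta_\zeta),
\]
whence
\[
\Theta_{\zeta\zeta}(1) = [\partial^h_\zeta, \partial_\zeta](1) = -\partial_\zeta(\Lambda \theta_\zeta).
\]
On the other hand, using $\partial_\zeta(\theta_\zeta) = 0$ (as $\theta_\zeta = \sum \zeta^j e_j$ is $t$-independent),
\[
\theta_\zeta^*(\theta_\zeta) = [\Lambda, \partial_\zeta](\theta_\zeta) = -\partial_\zeta(\Lambda \theta_\zeta),
\]
which matches $\Theta_{\zeta\zeta}(1)$. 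Finally, $\theta_\zeta^*$ lowers bidegree, so $\theta_\zeta^*(1) = 0$ and hence $\theta_\zeta \theta_\zeta^*(1) = 0$; combining gives $\theta_\zeta^* \theta_\zeta(1) = \theta_\zeta^*(\theta_\zeta) = \Theta_{\zeta\zeta}(1)$, which is the theorem since $H^{0,0}$ is the line bundle trivialized by $1$.

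The main obstacle is the careful bookkeeping in passing from the complex coordinates of Theorem~\ref{th:main} and Proposition~\ref{pr:main-1} to the real coordinates of Section~2.1, in particular tracking the factor $\theta_{z^j} = \tfrac12 e_j$ and verifying that the $s$-derivatives drop out of both fundamental commutators because $\Lambda$ depends only on $t$. A pleasant cross-check is the direct Hodge-star computation $\partial^h_\zeta(1) = *\partial_\zeta(*1) = *\partial_\zeta(\omega_n) = *(\omega_{n-1}\theta_\zeta)$: using the Lefschetz decomposition $\theta_\zeta = \theta_\zeta^0 + \omega \theta_\zeta^1$ with $\theta_\zeta^0 \in H^{1,1}$ primitive, one has $\omega_{n-1}\theta_\zeta^0 = 0$, so $\omega_{n-1}\theta_\zeta = n \omega_n \theta_\zeta^1$ and thus $\partial^h_\zeta(1) = n\theta_\zeta^1 = \Lambda(\theta_\zeta)$, in agreement with the $\mathfrak{sl}_2$-formula $\Lambda L^r u = r(n-k-r+1) L^{r-1} u$.
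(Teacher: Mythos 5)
Your proof is correct, but it takes a genuinely different route from the paper's. The paper proves this theorem by a direct, self-contained computation: writing $(\Theta_{\zeta\zeta}1,1)=\|\partial^h_\zeta 1\|^2-(1,1)_{\zeta\zeta}$, inserting the primitive decomposition $\theta_\zeta=\omega a_\zeta+b_\zeta$, and evaluating both terms with the Hodge star and the Hodge--Riemann form to get $na_\zeta^2|X|+\|b_\zeta\|^2=\|\theta_\zeta\|^2$. Crucially, that computation uses nothing from Theorem \ref{th:main}; Section 2 is explicitly positioned as the elementary background whose bookkeeping (cf.\ Proposition \ref{pr:n=2}) \emph{led to} the discovery of the commutator identity. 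You instead specialize Theorem \ref{th:main} and Proposition \ref{pr:main-1} to the real coordinates $t^j$ (the translation $\partial^h_{t^j}=[\Lambda,e_j]+\partial/\partial t^j$ and $\theta_\zeta^*=[\Lambda,\partial_\zeta]$ is carried out correctly, and the cancellation of the $s$-derivatives is the right observation), then evaluate on the frame $1$ using $\Lambda(1)=\partial_\zeta(1)=0$; this is in effect the restriction to $H^{0,0}$ of the general flatness argument of Section 1.5, i.e.\ Corollary \ref{th:cone-higgs}'s consequence for $H^{0,0}$. Your argument is not circular, since Theorem \ref{th:main} is proved in Section 3 independently of Section 2, and it is shorter; what it gives up is the self-contained, elementary character of the paper's proof and the explicit expression $na_\zeta^2|X|+\|b_\zeta\|^2$, which is what the subsequent remark uses to read off the effective Brunn--Minkowski inequality $(-\log|X|)_{\zeta\zeta}=\|\theta_\zeta\|^2/|X|$. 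Your Hodge-star cross-check $\partial^h_\zeta(1)=*(\omega_{n-1}\theta_\zeta)=n\theta_\zeta^1=\Lambda(\theta_\zeta)$ is exactly the bridge between the two computations.
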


\begin{proof}  Since the canonical frame $1$ of $H^{0,0}$ is holomorphic, we have
\begin{equation}
(\Theta_{\zeta\zeta} 1, 1)=||\partial^h_\zeta 1||^2-(1,1)_{\zeta\zeta}.
\end{equation}
Since $\partial^h_\zeta$=$*\partial_\zeta*$, we have
\begin{equation}
||\partial^h_\zeta 1||^2=||*(\theta_\zeta \omega_{n-1})||^2.
\end{equation}
Let
\begin{equation}
\theta_\zeta=\omega a_\zeta+b_\zeta,
\end{equation}
be the primitive decomposition of $\theta_\zeta$. Then we have
\begin{equation}
||\partial^h_\zeta 1||^2=n^2a_\zeta^2|X|, \ |X|:=\int_X \omega_n.
\end{equation}
On the other hand, we have
\begin{equation}
(1,1)_{\zeta\zeta}=|X|_{\zeta\zeta}=\int_X \theta_\zeta^2 \omega_{n-2}=\int_X (\omega a_\zeta+b_\zeta)^2\omega_{n-2} =n(n-1)a_\zeta^2 |X|-||b_\zeta||^2.
\end{equation}
Thus
\begin{equation}
(\Theta_{\zeta\zeta} 1, 1)=na_\zeta^2 |X|+||b_\zeta||^2=||\omega a_\zeta||^2+||b_\zeta||^2=||\theta_\zeta||^2.
\end{equation}
The proof is complete.
\end{proof}

\textbf{Remark}: The above theorem implies that $H^{0,0}$ is a positive line bundle and 
\begin{equation}
(-\log |X|)_{\zeta\zeta} =(-\log ||1||^2)_{\zeta\zeta} =\theta_\zeta^* \theta_\zeta=||\theta_\zeta||^2/|X|,
\end{equation}
which can be seen as an effective version of the Brunn-Minkowski inequality on compact K\"ahler manifold. Since $H^{n,n}$ is the dual bundle of $H^{0,0}$, thus at least in one dimensional case, we can guess and check that our total bundle $H$ is a flat Higgs bundle.

\subsection{Elementary proof for the two dimensional case}

In this section, we shall compute the curvature of $H^{1,1}$ in case $n=2$. Notice that the fibre of $H^{1,1}$ is spanned by $\{e_j\}$, thus by  \eqref{eq:theta-def}, we know that it is enough to compute $(\Theta_{\zeta\zeta} \theta_\eta, \theta_\eta)$, where we identify $\theta_\eta$ with $\theta_\eta 1$. Our aim is to prove the following theorem:

\begin{theorem}\label{th:n2-ele} $(\Theta_{\zeta\zeta} \theta_\eta, \theta_\eta)=R:=|| \theta_\zeta \theta\eta||^2-||\theta_\zeta^*\theta_\eta||^2$.
\end{theorem}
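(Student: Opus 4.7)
The plan is to reduce the curvature pairing to two directly computable quantities by an elementary Chern-connection identity, and then to evaluate both sides using how simple the Hodge star becomes on $H^{1,1}$ when $n=2$. Because $u := \theta_\eta$ is a constant (hence holomorphic) section of $H$ with $\partial_\zeta u = 0$, differentiating $(u,u)$ twice via the Chern rule $\partial_\zeta(A,B) = (\partial^h_\zeta A, B)+(A,\partial_\zeta B)$ and using $\Theta_{\zeta\zeta} u = -\partial_\zeta\partial^h_\zeta u$ gives
\[
(\Theta_{\zeta\zeta} u, u) = \|\partial^h_\zeta u\|^2 - \partial_\zeta^2(u,u).
\]
Thus it suffices to compute $\|\partial^h_\zeta u\|^2$ and $\partial_\zeta^2(u,u)$ and match their difference with $R = \|\theta_\zeta\theta_\eta\|^2 - \|\theta_\zeta^*\theta_\eta\|^2$.

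Next, formula (1.4) specialises on $H^{1,1}$ (for $n=2$) to the simple involution $*v = 2\omega a_v - v$, where $v = \omega a_v + b_v$ is the Lefschetz decomposition with $b_v$ primitive; in particular $** = I$ here. Combined with the identity $\partial^h_\zeta = *\partial_\zeta *$ from Section~2.1, this produces the concrete formula
\[
\partial^h_\zeta v = *\bigl(2\theta_\zeta a_v + 2\omega\,\partial_\zeta a_v\bigr)
\]
for any constant $v$, reducing everything to the scalar function $a_v(t)$. Solving the primitivity condition $\omega b_v = 0$ gives $a_v = V(\omega,v)/(2|X|)$, where $V(\alpha,\beta) := \int_X \alpha\wedge\beta$ and $|X| := \int_X \omega^2/2$; differentiating and using the primitive expansion $V(\theta_\zeta, v) = 2 a_\zeta a_v |X| - (b_\zeta, b_v)$ then produces the key formula $\partial_\zeta a_v = -a_\zeta a_v - (b_\zeta, b_v)/(2|X|)$.

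With the Hermitian norm $\|\omega a + b\|^2 = 2 a^2 |X| + \|b\|^2$ supplied by the primitive decomposition, substituting $v = \theta_\eta$ and propagating one more $\partial_\zeta$-derivative reduces both $\|\partial^h_\zeta u\|^2$ and $\partial_\zeta^2(u,u)$ to rational expressions in the scalars $a_\zeta, a_\eta, (b_\zeta, b_\eta), \|b_\zeta\|^2, |X|$; the terms cancel in pairs and the difference telescopes to $-8 a_\zeta a_\eta (b_\zeta, b_\eta)$. I will then evaluate $R$ in the same coordinates: $\theta_\zeta \theta_\eta \in H^{2,2}$ is a scalar multiple of $\omega_2$ (with $\|\omega_2\|^2 = |X|$), while $\theta_\zeta^*\theta_\eta \in H^{0,0}$ is a scalar (with $\|1\|^2 = |X|$), which yields
\[
\|\theta_\zeta\theta_\eta\|^2 = \tfrac{(2 a_\zeta a_\eta |X| - (b_\zeta, b_\eta))^2}{|X|}, \qquad \|\theta_\zeta^*\theta_\eta\|^2 = \tfrac{(2 a_\zeta a_\eta |X| + (b_\zeta, b_\eta))^2}{|X|},
\]
whose difference factors into the same $-8 a_\zeta a_\eta (b_\zeta, b_\eta)$.

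The main obstacle is the middle step: although conceptually the proof only uses the Chern-curvature identity together with the sign-flipping action of $*$ on the primitive part, one must carefully track how the Lefschetz pieces $a_v(t)$ and $b_v(t)$ of a fixed cohomology class $v$ depend on $t$ through the varying $\omega(t)$ when evaluating $\partial_\zeta a_v$, and then propagate this through the second derivative of $(u,u)$. Once the algebra is executed, the matching of both sides to $-8 a_\zeta a_\eta (b_\zeta, b_\eta)$ is transparent.
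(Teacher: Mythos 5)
Your proposal is correct and follows essentially the same route as the paper: reduce to $(\Theta_{\zeta\zeta}\theta_\eta,\theta_\eta)=\|\partial^h_\zeta\theta_\eta\|^2-\partial_\zeta^2\|\theta_\eta\|^2$ for the constant section $\theta_\eta$, control everything through the primitive decomposition and the derivative identity for the Lefschetz coefficient (your $\partial_\zeta a_v=-a_\zeta a_v-(b_\zeta,b_v)/(2|X|)$ is exactly the second identity of Proposition \ref{pr:n=2} under the dictionary $b_{\zeta\eta}=-(b_\zeta,b_\eta)/(2|X|)$), and match both sides to $-8a_\zeta a_\eta(b_\zeta,b_\eta)=16a_\zeta a_\eta b_{\zeta\eta}|X|$. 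The only differences are notational (inner products of primitive parts versus the coefficients $b_{\zeta\eta}$, and the explicit involution $*v=2\omega a_v-v$), so the two arguments are the same computation.
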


Still let
\begin{equation}
\theta_\zeta=\omega a_\zeta+b_\zeta,
\end{equation}
be the primitive decomposition of $\theta_\zeta$. Let us define $b_{\zeta\eta}$ such that
\begin{equation}
b_\zeta b_\eta=b_{\zeta\eta}\omega^2.
\end{equation}
Thus we have
\begin{equation}
|| \theta_\zeta \theta\eta||^2=4(b_{\zeta\eta}+a_\eta a_\zeta)^2|X|, \  ||\theta_\zeta^*\theta_\eta||^2=4(b_{\zeta\eta}-a_\eta a_\zeta)^2|X|,
\end{equation}
which implies that
\begin{equation}
R=16 a_\eta a_\zeta b_{\zeta\eta}|X|.
\end{equation}
Since each $\theta_\eta$ is a constant section, we have
\begin{equation}
(\Theta_{\zeta\zeta} \theta_\eta, \theta_\eta)=||\partial^h_\zeta \theta_\eta||^2-(||\theta_\eta||^2)_{\zeta\zeta}. 
\end{equation}
Thus it is enough to prove that
\begin{equation}\label{eq:16}
||\partial^h_\zeta \theta_\eta||^2-(||\theta_\eta||^2)_{\zeta\zeta}=16 a_\eta a_\zeta b_{\zeta\eta}|X|.
\end{equation}
The following inequalities will be crucial in our computations:

\begin{proposition}\label{pr:n=2} Let us write $a_{\eta,\zeta}=\partial_\zeta a_\eta$ and $b_{\zeta\eta, \lambda}=\partial_\lambda b_{\zeta\eta}$. Then we have
\begin{equation}
b_{\zeta\eta, \lambda}=-a_\zeta b_{\eta\lambda} -a_\eta b_{\zeta\lambda} -2 a_\lambda b_{\zeta\eta}, \ a_{\eta,\zeta}=b_{\zeta\eta}-a_\eta a_\zeta.
\end{equation}
\end{proposition}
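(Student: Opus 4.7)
The plan is to express both $a_\zeta$ and $b_{\zeta\eta}$ as explicit pairings on $X$ that depend on $t$ only through $\omega=\omega(t)$, and then differentiate in the direction $\partial_\lambda$. Pairing the primitive decomposition $\theta_\zeta=\omega a_\zeta+b_\zeta$ with $\omega$, using primitivity $\int_X \omega\, b_\zeta=0$ (which in dimension $n=2$ is exactly what primitive in $H^{1,1}$ means) together with $\int_X \omega^2=2|X|$, yields
\begin{equation*}
a_\zeta = \frac{1}{2|X|}\int_X \theta_\zeta\,\omega.
\end{equation*}
Expanding $\theta_\zeta\theta_\eta = \omega^2 a_\zeta a_\eta + \omega(a_\zeta b_\eta+a_\eta b_\zeta) + b_\zeta b_\eta$, integrating, and using primitivity of $b_\zeta,b_\eta$ together with $b_\zeta b_\eta = b_{\zeta\eta}\omega^2$, I obtain
\begin{equation*}
b_{\zeta\eta} = \frac{1}{2|X|}\int_X \theta_\zeta\,\theta_\eta \;-\; a_\zeta a_\eta.
\end{equation*}

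The key observation is that $\theta_\zeta,\theta_\eta,\theta_\lambda$ are constant in $t$, so $\int_X \theta_\zeta\theta_\eta$ is a topological invariant, while $\partial_\lambda\omega = \theta_\lambda = \omega a_\lambda + b_\lambda$, and therefore $\partial_\lambda(2|X|) = 2\int_X \omega\,\theta_\lambda = 4|X|a_\lambda$. Differentiating the formula for $a_\eta$ via the quotient rule and applying the pairing identity for $b_{\zeta\eta}$ gives
\begin{equation*}
a_{\eta,\zeta} = \frac{\int_X \theta_\eta\theta_\zeta}{2|X|} - 2a_\zeta a_\eta = (a_\eta a_\zeta + b_{\zeta\eta}) - 2a_\eta a_\zeta = b_{\zeta\eta} - a_\eta a_\zeta,
\end{equation*}
which is the second claim. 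For the first, I would differentiate $b_{\zeta\eta} = \tfrac{1}{2|X|}\int_X \theta_\zeta\theta_\eta - a_\zeta a_\eta$ in $t^\lambda$; since $\int_X\theta_\zeta\theta_\eta$ is a constant, only the denominator and the correction $a_\zeta a_\eta$ move, producing
\begin{equation*}
b_{\zeta\eta,\lambda} = -2a_\lambda(a_\zeta a_\eta + b_{\zeta\eta}) - a_{\zeta,\lambda}a_\eta - a_\zeta a_{\eta,\lambda},
\end{equation*}
into which I substitute the just-proved $a_{\zeta,\lambda} = b_{\zeta\lambda}-a_\zeta a_\lambda$ and $a_{\eta,\lambda} = b_{\eta\lambda}-a_\eta a_\lambda$; the three $a\cdot a\cdot a$ terms cancel exactly and the remainder is the desired $-a_\zeta b_{\eta\lambda}-a_\eta b_{\zeta\lambda}-2a_\lambda b_{\zeta\eta}$.

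The main obstacle here is only careful bookkeeping: the normalization $\omega_r=\omega^r/r!$, the factor $2$ in $\int_X\omega^2=2|X|$, and the order of the two substitutions. The only conceptual input is the linear parametrization $\omega(t)=\sum t^j e_j$, which gives $\partial_\lambda\omega=\theta_\lambda$; everything else is a mechanical consequence of the primitive decomposition combined with the fact that $\theta_\zeta,\theta_\eta,\theta_\lambda$ carry no $t$-dependence. Note also that it is essential to compute $a_{\eta,\zeta}$ \emph{first}, so that it is available for substitution in the computation of $b_{\zeta\eta,\lambda}$.
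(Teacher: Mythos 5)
Your proof is correct, and it takes a genuinely different route from the paper's. The paper argues at the level of cohomology classes: it differentiates the defining relation $b_\zeta b_\eta=b_{\zeta\eta}\omega^2$ directly, extracts $b_{\zeta,\lambda}=-(a_{\zeta,\lambda}\omega+a_\zeta\theta_\lambda)$ from $\theta_{\zeta,\lambda}\equiv 0$, and uses $\omega b_\eta=0$ to kill the unwanted terms; this gives the first identity, and the second is then obtained afterwards by differentiating $(\theta_\eta^2)_\zeta\equiv 0$ and cancelling a factor $2a_\eta$. You instead solve for $a_\zeta$ and $b_{\zeta\eta}$ as explicit scalars built from the topological invariants $\int_X\theta_\zeta\theta_\eta$ and the volume, and differentiate those scalar formulas, proving the second identity first and then the first. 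Your bookkeeping checks out (in particular $\int_X\omega^2=2|X|$, $\int_X\omega b_\zeta=0$, and $\partial_\lambda(2|X|)=4|X|a_\lambda$ are all right, and the three $a_\zeta a_\eta a_\lambda$ terms do cancel). Your route buys two things: you never need the derivative of the primitive component $b_{\zeta,\lambda}$, and you sidestep the paper's final division by $2a_\eta$, which strictly speaking needs an extra continuity or polarization remark when $a_\eta=0$. What it gives up is generality: your integral formulas exploit the fact that $b_\zeta b_\eta$ lands in the one-dimensional space $H^{2,2}(X,\C)$, which is exactly the $n=2$ situation, whereas the paper's class-level manipulation of the primitive decomposition is the pattern that eventually leads to the commutator identity of Theorem \ref{th:main}.
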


We leave it as an exercise for the reader to use the above proposition to prove the following two lemmas:

\begin{lemma} $||\partial^h_\zeta \theta_\eta||^2=8(b_{\zeta\eta}^2-a_\eta^2b_{\zeta\zeta})|X|$.
\end{lemma}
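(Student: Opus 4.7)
The plan is to apply the formula $\partial^h_\zeta = *\partial_\zeta *$ from section 2.1 directly to $\theta_\eta$. This formula is valid on $H^{1,1}$ because $\tau=**$ reduces to the identity there: for a surface one checks from the defining sum that $*(\omega a - b) = \omega a + b$. Writing the primitive decomposition $\theta_\eta = \omega a_\eta + b_\eta$, the Hodge star formula (with $k=p+q=2$, $n=2$, and sign $(-1)^{p-r}$) yields $*\theta_\eta = \omega a_\eta - b_\eta$.

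First I would differentiate to get
\[
\partial_\zeta *\theta_\eta = \theta_\zeta a_\eta + \omega a_{\eta,\zeta} - b_{\eta,\zeta}.
\]
Since $\theta_\eta = \sum\eta^k e_k$ is a constant section of $H$, the same differentiation applied to $\theta_\eta = \omega a_\eta + b_\eta$ produces the identity $\theta_\zeta a_\eta + \omega a_{\eta,\zeta} + b_{\eta,\zeta} = 0$. Subtracting collapses the expression above to $\partial_\zeta *\theta_\eta = -2 b_{\eta,\zeta}$. Now using Proposition \ref{pr:n=2} ($a_{\eta,\zeta} = b_{\zeta\eta} - a_\eta a_\zeta$) together with $\theta_\zeta = \omega a_\zeta + b_\zeta$ to solve for $b_{\eta,\zeta}$ from that vanishing identity, the $\omega a_\zeta a_\eta$-terms cancel and one obtains the Lefschetz decomposition
\[
b_{\eta,\zeta} = -a_\eta b_\zeta - \omega b_{\zeta\eta},
\]
whose primitive part is $-a_\eta b_\zeta$ and whose $\omega$-coefficient is $-b_{\zeta\eta}$.

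Next I would apply $*$ again. Since $*$ flips the sign of the primitive piece and preserves the $\omega$-piece on $H^{1,1}$, I get
\[
\partial^h_\zeta \theta_\eta = *(-2 b_{\eta,\zeta}) = 2\omega b_{\zeta\eta} - 2 a_\eta b_\zeta.
\]
Finally, for $u = \omega A + B \in H^{1,1}$ with $B$ primitive, the Hodge–Riemann relations give $\|u\|^2 = 2|X|A^2 + \|B\|^2$ with $\|B\|^2 := -\int_X B\wedge\overline B$. Plugging in $A = 2b_{\zeta\eta}$ and $B = -2a_\eta b_\zeta$ yields $\|\partial^h_\zeta\theta_\eta\|^2 = 8|X|b_{\zeta\eta}^2 + 4 a_\eta^2 \|b_\zeta\|^2$. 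Since $b_\zeta\wedge b_\zeta = b_{\zeta\zeta}\omega^2$ gives $\|b_\zeta\|^2 = -\int b_\zeta^2 = -2|X|b_{\zeta\zeta}$, substitution produces the desired $\|\partial^h_\zeta\theta_\eta\|^2 = 8(b_{\zeta\eta}^2 - a_\eta^2 b_{\zeta\zeta})|X|$.

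The only delicate step is the extraction of the Lefschetz decomposition of $b_{\eta,\zeta}$ in the second paragraph; everything else is a direct application of the Hodge star formula together with the Hodge–Riemann bilinear relations. One technical point worth noting is the consistency of the $\partial^h_\zeta = *\partial_\zeta *$ convention of section 2.1 on $H^{1,1}$: verifying $\tau=1$ there (i.e. $**=\mathrm{id}$ on primitive and on $\omega$-pieces separately) reduces Theorem \ref{th:main} to precisely this real-coordinate formula, so no additional factors appear.
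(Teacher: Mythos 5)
Your computation is correct and is exactly the argument the paper intends: the lemma is left as an exercise to be deduced from Proposition \ref{pr:n=2}, and you use its second identity $a_{\eta,\zeta}=b_{\zeta\eta}-a_\eta a_\zeta$ together with $\partial^h_\zeta=*\partial_\zeta*$ (valid on $H^{1,1}$ since $\tau=(-1)^k=1$ there) to obtain $\partial^h_\zeta\theta_\eta=2\omega b_{\zeta\eta}-2a_\eta b_\zeta$ and then evaluate the norm via the Hodge--Riemann relations. All the intermediate identities ($\partial_\zeta*\theta_\eta=-2b_{\eta,\zeta}$, the Lefschetz decomposition of $b_{\eta,\zeta}$, and $\|b_\zeta\|^2=-2|X|b_{\zeta\zeta}$) check out.
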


\begin{lemma} $(||\theta_\eta||^2)_{\zeta\zeta}=8(b_{\zeta\eta}^2-a_\eta^2b_{\zeta\zeta}-2a_\eta a_\zeta b_{\zeta\eta})|X|$.
\end{lemma}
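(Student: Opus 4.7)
The plan is to reduce everything to a single scalar computation by first making the norm explicit and then applying Proposition \ref{pr:n=2} mechanically. With $n=2$, $k=p+q=2$ and $i^{k^2}=1$, the star-operator formula from the introduction gives $*\theta_\eta = a_\eta\omega - b_\eta$, so, using $\omega\wedge b_\eta=0$, $\omega^2=2\omega_2$ and $b_\eta\wedge b_\eta = 2b_{\eta\eta}\omega_2$,
\begin{equation}
\|\theta_\eta\|^2 \;=\; \int_X \theta_\eta\wedge *\theta_\eta \;=\; \int_X a_\eta^2\,\omega^2 - \int_X b_\eta\wedge b_\eta \;=\; 2(a_\eta^2 - b_{\eta\eta})\,|X|.
\end{equation}
The $|X|$-derivatives I then borrow from the computation already carried out in Section~2.1 for $(\Theta_{\zeta\zeta}1,1)$, namely $|X|_\zeta = \int_X \theta_\zeta\omega = 2a_\zeta|X|$ and $|X|_{\zeta\zeta}=\int_X \theta_\zeta^2 = 2(a_\zeta^2+b_{\zeta\zeta})|X|$.

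Setting $f:=a_\eta^2 - b_{\eta\eta}$, the first identity of Proposition \ref{pr:n=2} yields $(a_\eta^2)_\zeta = 2a_\eta(b_{\zeta\eta}-a_\eta a_\zeta)$, and its second identity, specialized to $b_{\eta\eta,\zeta} = -2a_\eta b_{\zeta\eta} - 2a_\zeta b_{\eta\eta}$, gives
\begin{equation}
f_\zeta \;=\; 4a_\eta b_{\zeta\eta} - 2a_\eta^2 a_\zeta + 2a_\zeta b_{\eta\eta}.
\end{equation}
Differentiating once more with the same proposition (now also using $a_{\zeta,\zeta}=b_{\zeta\zeta}-a_\zeta^2$ and $b_{\zeta\eta,\zeta}=-3a_\zeta b_{\zeta\eta} - a_\eta b_{\zeta\zeta}$) expresses $f_{\zeta\zeta}$ as an explicit polynomial in the six scalars $a_\eta,a_\zeta,b_{\zeta\zeta},b_{\eta\eta},b_{\zeta\eta}$. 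Plugging into
\begin{equation}
(\|\theta_\eta\|^2)_{\zeta\zeta} \;=\; 2f_{\zeta\zeta}|X| \,+\, 4f_\zeta|X|_\zeta \,+\, 2f\,|X|_{\zeta\zeta}
\end{equation}
then reduces the lemma to verifying that the three groups of monomials combine in the claimed way.

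The substantive content of the proof is that among the transient terms produced by the three contributions the monomials $a_\eta^2 a_\zeta^2$, $a_\zeta^2 b_{\eta\eta}$ and $b_{\eta\eta}b_{\zeta\zeta}$ each cancel identically, leaving exactly the asserted $8(b_{\zeta\eta}^2 - a_\eta^2 b_{\zeta\zeta} - 2a_\eta a_\zeta b_{\zeta\eta})|X|$. This triple cancellation is the only delicate point and is nothing more than the algebraic fingerprint of the particular coefficients $-1,-1,-2$ appearing in the second identity of Proposition \ref{pr:n=2}; once the proposition is in hand the whole argument is pure bookkeeping, so the main obstacle is simply organizing the expansion so that these cancellations are visible without errors.
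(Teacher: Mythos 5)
Your computation is correct and follows exactly the route the paper intends (the lemma is left as an exercise to be deduced from Proposition \ref{pr:n=2}): I have checked that $\|\theta_\eta\|^2=2(a_\eta^2-b_{\eta\eta})|X|$, that $f_\zeta=4a_\eta b_{\zeta\eta}-2a_\eta^2a_\zeta+2a_\zeta b_{\eta\eta}$, and that in the sum $2f_{\zeta\zeta}|X|+4f_\zeta|X|_\zeta+2f|X|_{\zeta\zeta}$ the monomials $a_\eta^2a_\zeta^2$, $a_\zeta^2b_{\eta\eta}$ and $b_{\eta\eta}b_{\zeta\zeta}$ do cancel with coefficients $12-16+4$, $-12+16-4$ and $4-4$, leaving $8(b_{\zeta\eta}^2-a_\eta^2b_{\zeta\zeta}-2a_\eta a_\zeta b_{\zeta\eta})|X|$ as claimed. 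The only blemish is that you have swapped the labels ``first'' and ``second'' for the two identities of Proposition \ref{pr:n=2} (the relation $a_{\eta,\zeta}=b_{\zeta\eta}-a_\eta a_\zeta$ is the second one, and $b_{\eta\eta,\zeta}=-2a_\eta b_{\zeta\eta}-2a_\zeta b_{\eta\eta}$ comes from the first), which does not affect the argument.
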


Now it is clear that Theorem \ref{th:n2-ele} follows froms the above two lemmas. Thus it suffices to prove Proposition \ref{pr:n=2}.

\begin{proof}[Proof of Proposition \ref{pr:n=2}] Notice that $b_\zeta b_\eta=b_{\zeta\eta}\omega^2$ implies that
\begin{equation}
b_{\zeta,\lambda} b_\eta+
b_\zeta b_{\eta, \lambda}=
b_{\zeta\eta, \lambda}\omega^2
+2b_{\zeta\eta} \theta_\lambda\omega=b_{\zeta\eta, \lambda}\omega^2
+2b_{\zeta\eta} a_\lambda\omega^2.
\end{equation}
On the other hand, $\theta_{\zeta, \lambda}\equiv 0$ implies that
\begin{equation}
b_{\zeta,\lambda} =-(a_{\zeta,\lambda}\omega + a_\zeta \theta_\lambda).
\end{equation}
Thus
\begin{equation}
b_{\zeta,\lambda} b_\eta=-a_\zeta \theta_\lambda b_\eta=-a_\zeta b_{\lambda\eta}\omega^2,
\end{equation}
thus our first identity follows. Now let us prove the second one. Notice that $(\theta_\eta^2)_\zeta\equiv 0$ implies that
\begin{equation}
((a_\eta^2+b_{\eta\eta})\omega^2)_\zeta \equiv 0.
\end{equation}
Thus we have
\begin{equation}
(a_\eta^2+b_{\eta\eta})_\zeta+ 2(a_\eta^2+b_{\eta\eta})a_\zeta \equiv 0.
\end{equation}
By our first identity, we know that 
\begin{equation}
b_{\eta\eta, \zeta}= -2 a_\eta b_{\zeta\eta} -2 b_{\eta\eta} a_\zeta.
\end{equation}
The above two identities implies that
\begin{equation}
2a_\eta(a_{\eta,\zeta}-b_{\zeta\eta}-a_\eta a_\zeta) \equiv 0.
\end{equation}
Thus our second identity follows.
\end{proof}

\textbf{Remark}: As we can see, even in the two dimensional case, the proof of Theorem \ref{th:n2-ele} is not easy (in fact, we spent quite a lot of time to find the identities in Proposition \ref{pr:n=2}). We have also tried to find a higher dimensional version of Proposition \ref{pr:n=2}, but we failed. That is the main reason why we want to find an abstract proof of Theorem \ref{th:cone-higgs}. But Proposition \ref{pr:n=2} plays a crucial role in our finding of the identity in Theorem \ref{th:main}.

\section{Proof of Theorem \ref{th:main} and Proposition \ref{pr:main-1}}

We shall use the following two basic facts (see \cite{Demailly}, \cite{Wells} and \cite{Weil}) in our proof:

\medskip

\textbf{Fact 1}: If $u$ is a primitive $k$-class then
\begin{equation}
\Lambda(\omega_r u)=(n-k-r+1) \omega_{r-1} u, \ \ r \leq n-k+1.
\end{equation}

\medskip

\textbf{Fact 2}: If $u$ is a primitive $(p,q)$-class then
\begin{equation}
*(\omega_r u):= i^{k^2} (-1)^{p} \omega_{n-r-k} u, \ k:=p+q.
\end{equation}

\begin{proof}[Proof of Theorem \ref{th:main}]  Recall that $(u,v)=u\overline{*v}(X)$, thus \eqref{eq:chern} implies that $$\partial^h_{z^j}=\tau*(\partial/\partial z^j)*.$$ Thus it suffices to prove the last equality. By the Lefschetz decompostion, every smooth section of $H$ can be wriiten as
\begin{equation}
\sum \omega_r u^r,
\end{equation}
where each $u^r$ is primitive. Thus it is enough to prove Theorem \ref{th:main} for forms like $\omega_r u$, where $u$ is a primitive $(p,q)$-class. By \textbf{Fact 2}, we have
\begin{equation}
*(\omega_r u):= i^{k^2} (-1)^{p} \omega_{n-r-k} u, \ k:=p+q.
\end{equation}
Thus, applying the differential operator $\partial/\partial z^j$, we get 
\begin{equation}
(\partial/\partial z^j) *(\omega_r u)= i^{k^2} (-1)^{p}(\omega_{n-r-k} u_{j}+  \omega_{n-r-k-1} \theta_{z^j}u), 
\end{equation}
where
\begin{equation}
u_{j}:=\partial u/\partial z^j.
\end{equation}
Since $u$ is primitive, we know that $\omega_{n-k+1}u\equiv 0$, thus $$\omega_{n-k+1}\theta_{z^j}u\equiv 0,$$
 which implies that the primitive decomposition of $\theta_{z^j}u$ contains at most three terms. Thus we can write
\begin{equation}
\theta_{z^j}u=a+\omega b+\omega^2 c,
\end{equation}
where $a, b, c$ are primitive. Thus we have
\begin{eqnarray*}
\tau *(\partial/\partial z^j) *(\omega_r u) & = & (-1)^k  *(\partial/\partial z^j) *(\omega_r u) \\
& = & i^{k^2}(-1)^{p+k} * \omega_{n-r-k} u_{j} -\omega_{r-1}a
+(n-r-k)\omega_r b \\
&  & - (n-r-k)(n-r-k+1) )\omega_{r+1} c.
\end{eqnarray*}
On the other hand, we have
\begin{equation}
\theta_{z^j}\omega_r u=\omega_r(a+\omega b+\omega^2 c). 
\end{equation}
Thus by \textbf{Fact 1}, we have
\begin{eqnarray*}
\Lambda\theta_{z^j}\omega_r u & = & (n-r-k-1)\omega_{r-1} a +(r+1)(n-r-k)\omega_{r} b  \\
&  & + (r+1)(r+2)(n-r-k)(n-r-k+1)\omega_{r+1} c.
\end{eqnarray*}
and
\begin{eqnarray*}
\theta_{z^j}\Lambda \omega_r u & = & (n-r-k+1) \theta_{z^j} \omega_{r-1} u  \\
& =  &  (n-r-k+1)\omega_{r-1} (a+\omega b+\omega^2 c).
\end{eqnarray*}
Thus
\begin{equation}
[\Lambda, \theta_{z^j}]  \omega_r u=-2 \omega_{r-1} a+(n-2r-k) \omega_r b+(2r+2)(n+1-r-k)\omega_{r+1} c.
\end{equation}
Now we have
\begin{eqnarray*}
A  & := &  \tau *(\partial/\partial z^j) *(\omega_r u)- [\Lambda, \theta_{z^j}]  \omega_r u \\
& =  &  i^{k^2}(-1)^{p+k} * \omega_{n-r-k} u_{j} + \omega_{r-1} a+ r \omega_r b  \\
&   & -(n+r-k+2)(n-r-k+1) \omega_{r+1} c.
\end{eqnarray*}
On the other hand, we have
\begin{eqnarray*}
B  & := &  (\partial/\partial z^j) (\omega_r u) \\
& =  &  \omega_{r-1}(a+\omega b+\omega^2 c)+\omega_r u_j \\
& =  &   \omega_{r-1}a+ r \omega_r b + r(r+1) \omega_{r+1} c + \omega_r u_j.
\end{eqnarray*}
Thus we have
\begin{eqnarray*}
A-B  & = &  i^{k^2}(-1)^{p+k} * \omega_{n-r-k} u_{j} - \omega_r u_j \\
&   &  - [(n+r-k+2)(n-r-k+1)+r(r+1)] \omega_{r+1} c .
\end{eqnarray*}
But since $u$ is primitive, we have
\begin{equation}
\omega_{n-k+1} u\equiv 0,
\end{equation}
which implies that
\begin{equation}
\theta_{z^j} \omega_{n-k} u + \omega_{n-k+1} u_j \equiv 0.
\end{equation}
Notice that 
\begin{equation}
\theta_{z^j} \omega_{n-k} u=\omega_{n-k} (a+\omega b+\omega^2 c) =\omega_{n-k}\omega^2 c.
\end{equation}
Thus we have
\begin{equation}
\omega_{n-k+1} (u_j +(n-k+1) \omega c) \equiv 0,
\end{equation}
which implies the primitivity of 
\begin{equation}
v:=u_j +(n-k+1) \omega c.
\end{equation}
Now we have
\begin{eqnarray*}
i^{k^2}(-1)^{p+k} * \omega_{n-r-k} u_{j}  & = & i^{k^2}(-1)^{p+k} * \omega_{n-r-k} (v-(n-k+1) \omega c) \\
&  = & \omega_r v + (n-k+1) (n-r-k+1) \omega_{r+1} c .
\end{eqnarray*}
and
\begin{equation}
\omega_{r} u_{j}=\omega_{r} (v-(n-k+1) \omega c)
= \omega_r v- (n-k+1)(r+1) \omega_{r+1} c.
\end{equation}
The above two identities imply that $i^{k^2}(-1)^{p+k} * \omega_{n-r-k} u_{j} - \omega_r u_j$ can be written as
\begin{equation}
[(n-k+1) (n-r-k+1)+ (n-k+1)(r+1)] \omega_{r+1} c,
\end{equation}
which implies $A=B$ together with our formula for $A-B$.
The proof is complete.
\end{proof}

Now let us prove Proposition \ref{pr:main-1}. We claim that it is enough to prove
\begin{equation}\label{eq:3.1}
\theta_{z^k}^*=-\frac12 [\Lambda, [\Lambda, \overline{\theta_{z^k}}]].
\end{equation}
In fact, $\theta_{z^k}=[\partial/\partial z^k, \omega]$ implies that $[\Lambda, (\partial/ \partial z^k)^*] =\theta_{z^k}^*$. Now our fundamental identity implies that
\begin{equation}
-(\partial/\partial z^k)^*=[\Lambda, \overline{\theta_{z^k}}]+\partial / \partial \bar z^k.
\end{equation}
Hence
\begin{equation}
\theta_{z^k}^*=-[\Lambda, [\Lambda, \overline{\theta_{z^k}}]]-[\Lambda, \partial / \partial \bar z^k].
\end{equation}
Thus  \eqref{eq:3.1} implies that
\begin{equation}
\theta_{z^k}^*=[\Lambda,\partial / \partial \bar z^k],
\end{equation}
and our claim is true.

\begin{proof}[Proof of Proposition \ref{pr:main-1}]  Now let us prove \eqref{eq:3.1}, which is easy since there are no derivatives involved. Since $\theta_{z^k}^*=\tau *\overline{\theta_{z^k}}*$, it suffices to show
\begin{equation}
\tau *\overline{\theta_{z^k}}*=-\frac12 [\Lambda, [\Lambda, \overline{\theta_{z^k}}]].
\end{equation}
If $u$ is a primitive $(p,q)$-class then by \textbf{Fact 2}, we have
\begin{equation}
\overline{\theta_{z^k}}*(\omega_r u):= i^{k^2} (-1)^{p} \omega_{n-r-k} \overline{\theta_{z^k}} u, \ k:=p+q.
\end{equation}
As before, we can write
\begin{equation}
\overline{\theta_{z^k}} u=a+\omega b+\omega^2 c,
\end{equation}
where $a, b, c$ are primitive. Put
\begin{equation}
M:=n-r-k.
\end{equation}
Then
\begin{equation}
\tau *\overline{\theta_{z^k}}*=-\omega_{r-2}a+(M+1)\omega_{r-1}b-(M+1)(M+2)c.
\end{equation}
On the other hand, notice that
\begin{equation}
-\frac12 [\Lambda, [\Lambda, \overline{\theta_{z^k}}]]=\Lambda \overline{\theta_{z^k}} \Lambda-\frac12 (\Lambda^2\overline{\theta_{z^k}}+\overline{\theta_{z^k}}\Lambda^2).
\end{equation}
Now 
\begin{equation}
\Lambda^2\overline{\theta_{z^k}} \omega_r u =\Lambda^2 \omega_r(a+\omega b+\omega^2 c).
\end{equation}
By \textbf{Fact 1}, we have
\begin{equation*}
\Lambda \omega_r(a+\omega b+\omega^2 c)=(M-1) \omega_{r-1}a+(r+1) M\omega_{r}b+(r+1)(r+2)(M+1) \omega_{r+1}c,
\end{equation*}
thus
\begin{eqnarray*}
\Lambda^2\overline{\theta_{z^k}} \omega_r u & = & (M-1)M \omega_{r-2}a+(r+1) M(M+1)\omega_{r-1}b \\
   &  & +(r+1)(r+2)(M+1)(M+2) \omega_{r}c.
\end{eqnarray*}
And we also have
\begin{equation}
\overline{\theta_{z^k}}\Lambda^2  \omega_r u=(M+1)(M+2)(\omega_{r-2}a+ (r-1)\omega_{r-1}b +r(r-1)\omega_{r}c),
\end{equation}
and
\begin{equation}
\Lambda \overline{\theta_{z^k}}\Lambda  \omega_r u=(M+1)(M\omega_{r-2}a+ r(M+1)\omega_{r-1}b +r(r+1)(M+2)\omega_{r}c).
\end{equation}
Thus 
\begin{equation}
-\frac12 [\Lambda, [\Lambda, \overline{\theta_{z^k}}]]=-\omega_{r-2}a +(M+1)\omega_{r-1}b-(M+1)(M+2)c=\tau *\overline{\theta_{z^k}}*.
\end{equation}
The proof is complete.
\end{proof}

\end{document}